\newtheorem{theorem}{\bf Theorem}[section]
\newtheorem{definition}{\bf Definition}[section]
\newtheorem{remark}{\bf Remark}[section]
\newtheorem{lemma}{\bf Lemma}[section]
\newtheorem{example}{\bf Example}[section]
\begin{document}

\title{Second-Generation Wavelet-inspired Tensor Product
with Applications in Hyperspectral Imaging}

\author{%
Aneesh Panchal\,\orcidlink{0009-0005-5727-3901} and 
Ratikanta Behera\,\orcidlink{0000-0002-6237-5700}%
\thanks{The authors are in the Department of Computational and Data Sciences, Indian Institute of Science, Bangalore, Karnataka-560012, India (e-mails: aneeshp@iisc.ac.in, ratikanta@iisc.ac.in)}
}

\maketitle

\begin{abstract}
This paper introduces the $w$-product, a novel wavelet-based tensor multiplication scheme leveraging second-generation wavelet transforms to achieve linear transformation complexity while preserving essential algebraic properties. The $w$-product outperforms existing tensor multiplication approaches by enabling fast and numerically stable tensor decompositions by proposing ``$w$-svd'' and its sparse variant ``sp-$w$-svd'', for efficient low-rank approximations with significantly reduced computational costs. Experiments on low-rank hyperspectral image reconstruction demonstrate up to a $92.21$ times speedup compared to state-of-the-art ``$t$-svd'', with comparable PSNR and SSIM metrics. We discuss the Moore-Penrose inverse of tensors based on the $w$-product and examine its essential properties. Numerical examples are provided to support the theoretical results. Then, hyperspectral image deblurring experiments demonstrate up to $27.88$ times speedup with improved image quality. In particular, the $w$-product and the sp-$w$-product exhibit exponentially increasing acceleration with the decomposition level compared to the traditional approach of the $t$-product. This work provides a scalable framework for multidimensional data analysis, with future research directions including adaptive wavelet designs, higher-order tensor extensions, and real-time implementations.
\end{abstract}

\begin{IEEEkeywords}
Tensor Product, Tensor Decompositions, Low-Rank Decomposition, Second-Generation Wavelets, Hyperspectral Image Processing.
\end{IEEEkeywords}

\section{Introduction and Motivations}\label{sec:intro}
\IEEEPARstart{T}{ensors} serve as powerful tools for analyzing 
multidimensional data in computer vision~\cite{panagakis2021tensor}, hyperspectral imaging~\cite{fan2017hyperspectral}, and signal processing~\cite{qi2017tensor}. 
Tensor algebra provides an elegant mathematical framework for representing and analyzing multiway relationships through compact formulations 
such as $t$-product~\cite{kilmer2011, Miao2020Gene}, $\phi$-product~\cite{Songphiproduct}, and $m$-product~\cite{kernfeld2015}.
However, these frameworks often suffer from high computational costs due to the heavy use of Fast Fourier Transform (FFT)-based transformations ($t$-product) and large matrix multiplications ($\phi$-product and $m$-product), which makes them unsuitable for large-scale data~\cite{liu2024revisiting}.

To address this limitation, we propose a new wavelet-based tensor multiplication approach, referred to as the $w$-product. This approach is based on second-generation wavelets (lifting scheme wavelets) and achieves linear computational complexity in transformation while preserving the essential algebraic properties of the tensor operations. 
The $w$-product captures both global and local correlations through a recursive process that derives fine-scale detail coefficients from coarse-scale coefficients at successive resolution levels. The proposed framework exhibits superior computational efficiency because it requires significantly fewer arithmetic operations than established tensor multiplication approaches while maintaining perfect reconstruction capabilities. In addition, it provides enhanced numerical stability and supports multiresolution representation for higher-order data analysis.

Tensors have also been extensively used to represent hyperspectral data~\cite{cao2020enhanced, liu2024bi}. Currently, hyperspectral tensor analysis primarily relies on the $t$-product~\cite{xu2019nonlocal, kong2025image}, which uses FFT and thus lacks the ability to capture localized information~\cite{appiah2024performance, sharif2014comparative}. Additionally, the transformation between real and complex domains causes loss of phase information, which is critical for image analysis. The proposed $w$-product offers a novel approach for hyperspectral image analysis that captures localized features and operates exclusively in the real domain.

The major contributions of this work are:
\begin{enumerate}
\item We introduce the $w$-product, an approach that transforms tensor operations into matrix operations in the wavelet domain, where tensors decompose into sequences of independent matrices with linear transformation complexity, enabling efficient processing while preserving the inherent multidimensional characteristics.

\item We prove the fundamental algebraic properties within the proposed $w$-product framework, specifically the trace, singular value decomposition (SVD), and Moore-Penrose inverse of tensors.
    
\item We demonstrate through a detailed complexity analysis that the proposed method achieves significant computational advantages over existing formulations of $t$-products and $m$-products. 

\item We propose an efficient tensor decomposition framework called ``$w$-svd'' and its sparse variant ``sp-$w$-svd'' for the low-rank approximation. These approaches exhibit significantly improved computational efficiency compared to existing state-of-the-art tensor decomposition techniques.

\item We demonstrate the effectiveness of our proposed tensor multiplication framework through hyperspectral image reconstruction and deblurring experiments, specifically, in image reconstruction, achieving up to $92.21$ times speedup while maintaining comparable PSNR and SSIM values, and image deblurring achieved a speedup of up to $27.88$ times along with improved PSNR and SSIM values.

\end{enumerate}

The paper is organized as follows: Section~\ref{sec:defn} defines the $w$-product and its properties, Section~\ref{sec:complexity} analyzes computational complexity, Section~\ref{sec:application} presents the ``sp-$w$-svd'' algorithm and demonstrates applications of the $w$-product in low-rank hyperspectral image reconstruction and hyperspectral image deblurring. Finally, Section~\ref{sec:conclusion} concludes the article with future research directions.

\section{Definitions and Theorems}\label{sec:defn}
Let $\mathbb{R}^{n_1 \times n_2 \times p}$ be the set of third-order real-valued tensors of dimensions $n_1 \times n_2 \times p$. The $k$th frontal slice of the tensor $\mathcal{A}\in\mathbb{R}^{n_1 \times n_2 \times p}$ is denoted by $\mathcal{A}^{(k)}=\mathcal{A}(:,:,k)$.
Elements of $\mathcal{A}$ are denoted by $\mathcal{A}[{i,j,k}]$, such that $i=1,2, \dots, n_1$, $j = 1,2, \dots, n_2$ and $k = 1,2, \dots, p$.  
\subsection{Second-generation wavelet transforms}
Let $f_{(0)} = f \in \mathbb{R}^{p}$ with $p = 2^L m$, where $m, L \in \mathbb{N}$ and $L$ denotes the number of levels. The second-generation wavelet transformation~\cite{Swe98Thelift} is expressed as follows,
\begin{equation}\label{eq:2ndgen_wavelet_forward}
    W_{[L]}(f) = \left(s_{L}, d_{L}, d_{(L-1)}, \dots, d_{1}\right),
\end{equation}
with $\left(s_{j}, d_{j}\right) = W_{j} f_{(j-1)}$, and $f_{(j-1)} = s_{(j-1)}$, for $j = 1, 2, \dots, L$. The transform operator is defined as,
\begin{equation}
    W_{[L]} = W_{L} W_{(L-1)} \dots W_{1},
\end{equation}
where, $W_{j} = 
    \begin{bmatrix}
        I & U_{j} \\ 
        0 & I
    \end{bmatrix}
    \begin{bmatrix}
        I & 0 \\ 
        -P_{j} & I
    \end{bmatrix},$
and $P_{j}$ and $U_{j}$ represent the predict and update operators at level $j$, respectively. At any level $j$, the wavelet transform is given by,
\begin{equation*}
    \begin{bmatrix}
        s_{j} \\ d_{j}
    \end{bmatrix}
    = W_{j} f_{(j-1)} = 
    \begin{bmatrix}
        I & U_{j} \\ 
        0 & I
    \end{bmatrix}
    \begin{bmatrix}
        I & 0 \\ 
        -P_{j} & I
    \end{bmatrix}
    \begin{bmatrix}
        \mathrm{even}_{(j-1)} \\ 
        \mathrm{odd}_{(j-1)}
    \end{bmatrix}.
\end{equation*}
Similarly, the second-generation inverse wavelet transform is given by,
    \begin{equation}\label{eq:2ndgen_wavelet_inverse}
    \begin{split}
        f = &W_{[L]}^{-1}\left(s_{L}, d_{L}, d_{(L-1)}, \dots, d_{1}\right),\\
        &\text{where, } W_{[L]}^{-1} = W_{1}^{-1}\dots W_{(L-1)}^{-1} W_{L}^{-1}.
    \end{split}
    \end{equation}
    At any level $j$, it is given by,
    $$f_{(j-1)} = \left(W_{j}\right)^{-1}\begin{bmatrix}
            s_{j}\\d_{j}
        \end{bmatrix} = \begin{bmatrix}
            I & 0 \\ P_{j} & I
        \end{bmatrix}\begin{bmatrix}
            I & -U_{j} \\ 0 & I
        \end{bmatrix}\begin{bmatrix}
            s_{j}\\ d_{j}            
        \end{bmatrix}.$$

\begin{remark}
It is important to note that second-generation wavelet transforms are generally non-orthogonal.
\end{remark}

\begin{example}
    To explain 
    second-generation wavelets, we consider the operators $U = 0$ and $P$ as the interpolating operator. Since $U=0$, the even signals $\{0, 2, 4, 6\}$ (illustrated in {blue} color in Fig.~\ref{fig:2g-wavelet-working}) are retained as the smooth coefficients $\mathcal{A}_{s_1}$ at level $1$, while the detail coefficients, $\mathcal{A}_{d_1}$ (shown in {red} color in Fig.~\ref{fig:2g-wavelet-working}), are computed as the difference between the odd signals $\{1, 3, 5, 7\}$ and their neighboring interpolates. Proceeding to three levels of wavelet decomposition (because we initially have $2^3=8$ signals), the decomposition results in a single smooth coefficient, $\mathcal{A}_{s_3}$, and a single detail coefficient, $\mathcal{A}_{d_3}$. 
    
    For reconstruction, starting from the smoothest coefficient, $\mathcal{A}_{s_3}$, and all the detail coefficients $\{\mathcal{A}_{d_1}, \mathcal{A}_{d_2}, \mathcal{A}_{d_3}\}$, the process involves interpolating the smooth coefficient, $\mathcal{A}_{s_3}$, and then adding the detail coefficient from the last level, $\mathcal{A}_{d_3}$, to obtain the next smooth coefficient, $\mathcal{A}_{s_2}$. Continuing this process step-by-step, we eventually recover the original signal, $\mathcal{A}$. This example is visually illustrated in Fig.~\ref{fig:2g-wavelet-working}.
\end{example}
\begin{figure}
    \centering
    \includegraphics[width=\linewidth]{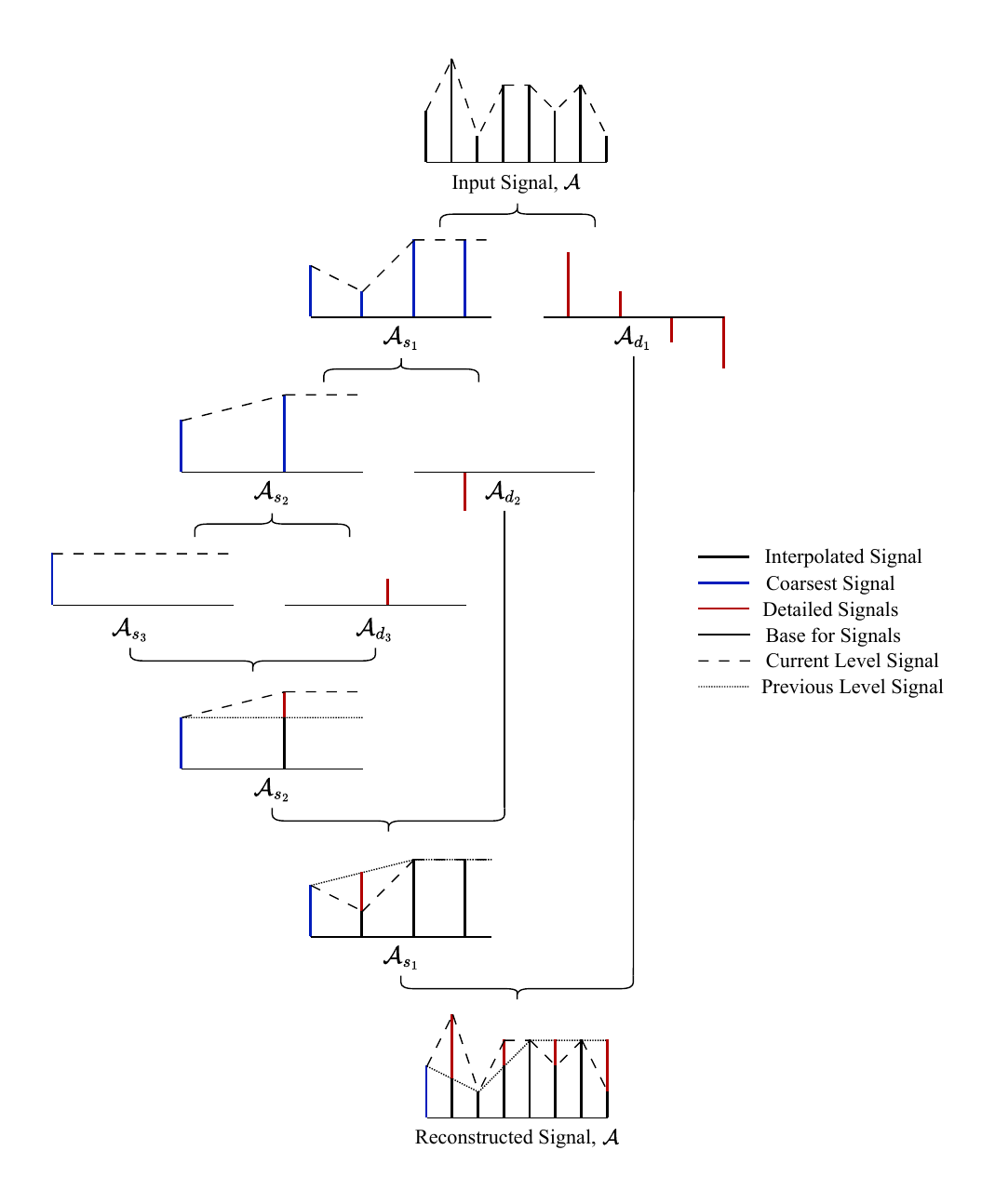}
    \caption{Visual representation of a simple second-generation wavelet decomposition and reconstruction.}
    \label{fig:2g-wavelet-working}
\end{figure}

We now consider the simplest form of the second-generation wavelet transform for the wavelet product, known as the lazy wavelet transform, where $P_{j} = I$ and $U_{j} = \frac{1}{2} I$ for every $j = 1, 2, \dots, L$.

\begin{lemma}[Perfect Reconstruction~\cite{Swe98Thelift}]\label{lem:perfect_reconstruction}
    For all $f \in \mathbb{R}^{p}$ with $p = 2^L m$, where $m, L \in \mathbb{N}$, the second-generation wavelet transform $W_{[L]}$ satisfies the perfect reconstruction property, 
    \begin{equation}
        W_{[L]}^{-1}(W_{[L]}(f)) = f.            
    \end{equation}
\end{lemma}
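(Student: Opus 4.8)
The plan is to reduce the global perfect-reconstruction identity to a purely algebraic fact about the single-level operators, and then to propagate that fact through the multilevel composition by telescoping. The core observation is that each $W_j$ is built from two block-unitriangular factors, and such factors are invertible for \emph{any} choice of the predict and update operators $P_j$ and $U_j$; invertibility does not require $P_j$ or $U_j$ themselves to be invertible. I would make this precise before assembling the levels.

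First I would verify the single-level identity $W_j^{-1} W_j = I$. Substituting the stated factorizations of $W_j$ and $W_j^{-1}$, the product $W_j^{-1} W_j$ contains the two inner elementary matrices adjacent to one another, and I would cancel them using
\begin{equation*}
\begin{bmatrix} I & -U_j \\ 0 & I \end{bmatrix}\begin{bmatrix} I & U_j \\ 0 & I \end{bmatrix} = I, \qquad
\begin{bmatrix} I & 0 \\ P_j & I \end{bmatrix}\begin{bmatrix} I & 0 \\ -P_j & I \end{bmatrix} = I,
\end{equation*}
which hold because adding and then subtracting $U_j$ (respectively $P_j$) in the single off-diagonal block returns the identity. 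These two cancellations collapse $W_j^{-1} W_j$ to $I$. The even/odd split preceding the lifting steps is merely a permutation of the entries of $f_{(j-1)}$, hence trivially invertible, with the reconstruction applying the matching merge; absorbing this permutation into $W_j$ and $W_j^{-1}$ does not disturb the cancellation above.

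Next I would confirm the dimensional bookkeeping that makes the split admissible at every level and then telescope. Since $p = 2^L m$, the length of $f_{(j-1)} = s_{(j-1)}$ entering level $j$ is $2^{L-j+1} m$, which is even for all $j = 1, \dots, L$, so the even/odd decomposition is always well defined and $W_j, W_j^{-1}$ act on vectors of matching size. Using $W_{[L]} = W_L W_{(L-1)} \cdots W_1$ together with $W_{[L]}^{-1} = W_1^{-1} \cdots W_{(L-1)}^{-1} W_L^{-1}$, I would peel the innermost pair,
\begin{equation*}
W_{[L]}^{-1} W_{[L]} = W_1^{-1} \cdots W_{(L-1)}^{-1} \left( W_L^{-1} W_L \right) W_{(L-1)} \cdots W_1,
\end{equation*}
replace $W_L^{-1} W_L$ by $I$, and repeat by a downward induction on the level index until every factor cancels and only the identity remains. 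Applying this operator to $W_{[L]}(f)$ then yields $W_{[L]}^{-1}(W_{[L]}(f)) = f$.

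I expect no serious obstacle here: the statement is an identity among invertible linear maps rather than an estimate, so there is no analysis to control. The only point genuinely requiring care is emphasizing that the block-unitriangular factors are invertible irrespective of whether $P_j$ and $U_j$ are singular, ensuring the argument covers the general lifting scheme and not only the lazy-wavelet special case $P_j = I$, $U_j = \tfrac12 I$ adopted later for the $w$-product.
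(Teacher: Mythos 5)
Your proof is correct, but there is nothing in the paper to compare it against: the lemma is stated with a citation to Sweldens' lifting-scheme paper and no proof is given in the text. Your argument is the standard one and is sound --- the two block-unitriangular cancellations $\bigl[\begin{smallmatrix} I & -U_j \\ 0 & I \end{smallmatrix}\bigr]\bigl[\begin{smallmatrix} I & U_j \\ 0 & I \end{smallmatrix}\bigr] = I$ and $\bigl[\begin{smallmatrix} I & 0 \\ P_j & I \end{smallmatrix}\bigr]\bigl[\begin{smallmatrix} I & 0 \\ -P_j & I \end{smallmatrix}\bigr] = I$ give $W_j^{-1}W_j = I$ for arbitrary $P_j, U_j$, the split/merge is a permutation, the factor $p = 2^L m$ guarantees evenness of the signal length at every level, and the telescoping of $W_1^{-1}\cdots W_L^{-1}W_L\cdots W_1$ finishes the job. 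Your emphasis that invertibility of the lifting steps needs nothing from $P_j$ or $U_j$ is exactly the right point and covers the general scheme, not just the lazy wavelet used later. The only bookkeeping detail worth making explicit is that, since $W_j$ acts only on the smooth part $s_{j-1}$ while the previously extracted details $d_{j-1},\dots,d_1$ are carried along unchanged, the composition $W_{[L]} = W_L\cdots W_1$ should be read with each $W_j$ extended by the identity on those detail channels; your telescoping argument goes through verbatim under that convention.
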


\begin{lemma}[Linearity]\label{lem:linearity}
    The wavelet transform defined by $W_{[L]}$ for lazy wavelets is linear, that is, for any $f,g \in \mathbb{R}^p$ with $p = 2^L m$, where $m, L \in \mathbb{N}$ and scalars $\alpha, \beta \in \mathbb{R}$, the following holds,
    \begin{equation}
        W_{[L]}(\alpha f + \beta g) = \alpha W_{[L]}(f) + \beta W_{[L]}(g).
    \end{equation}
\end{lemma}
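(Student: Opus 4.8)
The plan is to exploit the fact that, for lazy wavelets, every operator appearing in the factorization of $W_{[L]}$ is a fixed (input-independent) linear map, so that $W_{[L]}$ itself is a composition of linear maps and is therefore linear. First I would observe that with $P_j = I$ and $U_j = \frac{1}{2} I$, each level operator
\begin{equation*}
    W_j = \begin{bmatrix} I & \frac{1}{2} I \\ 0 & I \end{bmatrix}\begin{bmatrix} I & 0 \\ -I & I \end{bmatrix}
\end{equation*}
is a product of two constant block matrices and hence is itself a constant matrix, independent of the vector to which it is applied. In particular, no entry of $W_j$ depends on the data being transformed.

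Next I would address the split into even and odd samples that precedes each $W_j$. The reordering $f_{(j-1)} \mapsto (\mathrm{even}_{(j-1)}, \mathrm{odd}_{(j-1)})$ is merely a selection/permutation of coordinates, which is a linear operation represented by a permutation matrix. Thus every elementary step of the transform — splitting, predicting, and updating — is linear, and each is given by a data-independent matrix.

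The remaining point, which I expect to be the only real subtlety, is that at level $j$ the operator $W_j$ acts only on the smooth subband $s_{(j-1)} = f_{(j-1)}$, while the detail coefficients $d_1, \dots, d_{(j-1)}$ already produced are carried forward unchanged. I would handle this by viewing the level-$j$ step as a block-diagonal map on the full space $\mathbb{R}^p$: the identity on the already-extracted detail block and the linear map $W_j$ on the smooth block. A block-diagonal matrix whose blocks are linear is itself linear, so each level contributes a fixed matrix acting on all of $\mathbb{R}^p$.

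Finally, I would conclude by composition. Since $W_{[L]} = W_L W_{(L-1)} \cdots W_1$ is a product of fixed matrices, it is itself a fixed matrix $W \in \mathbb{R}^{p \times p}$, and matrix-vector multiplication distributes over addition and commutes with scalar multiplication, giving $W(\alpha f + \beta g) = \alpha W f + \beta W g$. Equivalently, this can be organized as an induction on the number of levels $L$, using the base case $W_{[1]} = W_1$ together with the elementary fact that the composition of two linear maps is linear; either route reduces the claim to the linearity of fixed-matrix multiplication, which is the crux and is otherwise routine.
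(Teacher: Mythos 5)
Your proof is correct. Note that the paper states this lemma without providing a proof of its own, so there is nothing to diverge from: your argument --- each $W_j$ is a data-independent matrix since $P_j = I$ and $U_j = \tfrac{1}{2}I$ are constant operators, the even/odd split is a permutation, the already-extracted detail blocks are carried by an identity block, and the composition of fixed matrices is a fixed matrix --- is exactly the standard justification the paper leaves implicit. Your observation that the carried-forward detail coefficients must be handled via a block-diagonal extension to all of $\mathbb{R}^p$ is the one point worth making explicit, and you make it correctly.
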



\subsection{Second-generation wavelet-inspired tensor product ($w$-product)}
Let $\mathcal{A}\in\mathbb{R}^{n_1\times n_2\times p}$ and $\mathcal{B}\in\mathbb{R}^{n_2\times n_3\times p}$ be two tensors, where $p = 2^Lm$ for $m,L\in\mathbb{N}$. We define the wavelet transformation operator, $W_{[L]}:\mathbb{R}^p\to\mathbb{R}^{\frac{p}{2}}\times\mathbb{R}^{\frac{p}{4}}\times \cdots \times\mathbb{R}^{\frac{p}{2^L}}$.  We now define a new type of multiplication between tensors,  termed the $w$-product as follows. 
\begin{definition}[$w$-product]\label{def:wproduct}
    Let $\mathcal{A} \in \mathbb{R}^{n_1 \times n_2 \times p}$ and $\mathcal{B} \in \mathbb{R}^{n_2 \times n_3 \times p}$  with $p = 2^L m$, where $m, L \in \mathbb{N}$, then the $w$-product $\mathcal{A}\star_w\mathcal{B} \in \mathbb{R}^{n_1 \times n_3 \times p}$ is defined as,
    \begin{equation}
        \mathcal{A}\star_w\mathcal{B} = W_{[L]}^{-1}\left(W_{[L]}(\mathcal{A})\Delta W_{[L]}(\mathcal{B})\right),
    \end{equation}
    where, $W_{[L]}$ and $W_{[L]}^{-1}$ are given by Eq. \eqref{eq:2ndgen_wavelet_forward} and Eq. \eqref{eq:2ndgen_wavelet_inverse}, and $\Delta$ represents the face-product.
\end{definition}
Computation of the $w$-product, $\mathcal{A}\star_w\mathcal{B}$ follows the systematic procedure discussed in Algorithm~\ref{alg:w_product}. It is worth mentioning that the $w$-product is associative due to the linearity of the wavelet transform and matrix face-products in the wavelet-domain, i.e., $(\mathcal{A} \star_w \mathcal{B}) \star_w \mathcal{C} = \mathcal{A} \star_w (\mathcal{B} \star_w \mathcal{C})$, for suitable dimensions of $\mathcal{A}$, $\mathcal{B}$, and $\mathcal{C}$. Proof is given in Theorem~\ref{thrm:associativity}.

The $w$-product algorithm provides a hierarchical framework for multiplying third-order tensors through a multilevel wavelet transformation. Given tensors $\mathcal{A} \in \mathbb{R}^{n_1 \times n_2 \times p}$ and $\mathcal{B} \in \mathbb{R}^{n_2 \times n_3 \times p}$, where $p=2^L m$, the method successively applies $L$ levels of discrete wavelet decomposition along the third mode to extract the detail and smooth components at each scale. In the forward pass, the algorithm computes forward lazy wavelet transforms along the third dimension. Subsequently, at each level $j$, the face product ($\mathcal{A}\Delta\mathcal{B}$) was computed between same coefficients of $\mathcal{A}$ and $\mathcal{B}$. Subsequently, inverse wavelet reconstruction is employed in a bottom-up manner to synthesize the resulting tensor $\mathcal{C} = \mathcal{A} \star_w \mathcal{B}$, effectively integrating information across all scales. For ease of understanding, the working of the $w$-product is shown in Fig.~\ref{fig:wproduct} using the third dimension $p=2^3$.
\begin{figure*}
	\centering
	\includegraphics[width=0.9\textwidth]{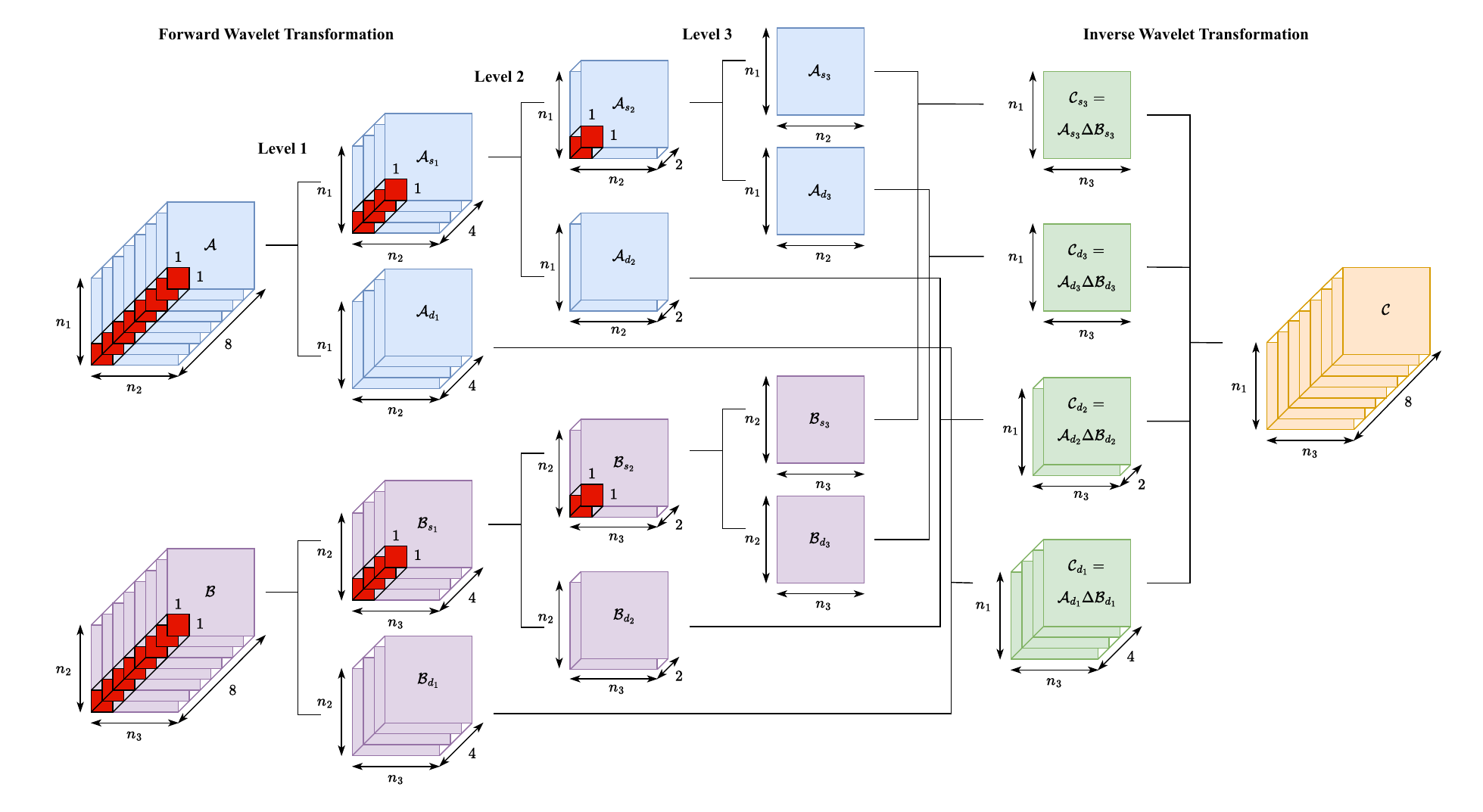}
	\caption{Flowchart explaining the working of the proposed wavelet product for tensors with assumption $p=2^3$.}
	\label{fig:wproduct}
\end{figure*}

\begin{algorithm}
\small
\caption{$w$-product for third-Order tensors}
\label{alg:w_product}
\begin{algorithmic}[1]
\REQUIRE Tensor $\mathcal{A} \in \mathbb{R}^{n_1 \times n_2 \times p}$, $\mathcal{B} \in \mathbb{R}^{n_2 \times n_3 \times p}$ with $p=2^L m$ where $m,L \in \mathbb{N}$

\STATE Initialize: $\mathcal{A}_{s_{0}} = \mathcal{A}$, $\mathcal{B}_{s_{0}} = \mathcal{B}$
\FOR{$j=1$ {\textbf{to}} $L$}
    \STATE Set: $\ell = p/\left(2^{j-1}\right)$

    \COMMENT{{\color{CornflowerBlue}{$\triangleright$ Forward Wavelet Transform for $\mathcal{A}$}}}
    \STATE Detail coefficients: $\mathcal{A}_{d_{j}} = \mathcal{A}_{s_{j-1}}[:,:,1:2:(\ell-1)] - \mathcal{A}_{s_{j-1}}[:,:,2:2:\ell]$
    \STATE Smooth coefficients: $\mathcal{A}_{s_{j}} = \mathcal{A}_{s_{j-1}}[:,:,2:2:\ell] + \mathcal{A}_{d_{j}} / 2$

    \COMMENT{{\color{Purple}{$\triangleright$ Forward Wavelet Transform for $\mathcal{B}$}}}
    \STATE Detail coefficients: $\mathcal{B}_{d_{j}} = \mathcal{B}_{s_{j-1}}[:,:,1:2:(\ell-1)] - \mathcal{B}_{s_{j-1}}[:,:,2:2:\ell]$
    \STATE Smooth coefficients: $\mathcal{B}_{s_{j}} = \mathcal{B}_{s_{j-1}}[:,:,2:2:\ell] + \mathcal{B}_{d_{j}} / 2$
\ENDFOR

\COMMENT{{\color{LimeGreen}{$\triangleright$ Tensor Product in Wavelet Domain}}}
\FOR{$j=1$ {\textbf{to}} $L$}
    \FOR{$k=1$ {\textbf{to}} $p/2^{j}$}
        \STATE $\mathcal{C}_{d_j}[:,:,k] \gets \mathcal{A}_{d_j}[:,:,k] \mathcal{B}_{d_j}[:,:,k]$
        \IF{$j$ {\textbf{is}} $L$}
            \STATE $\mathcal{C}_{s_L}[:,:,k] \gets \mathcal{A}_{s_L}[:,:,k] \mathcal{B}_{s_L}[:,:,k]$
        \ENDIF
    \ENDFOR
\ENDFOR

\COMMENT{{\color{YellowOrange}{$\triangleright$ Inverse Wavelet Transform}}}
\STATE Initialize: $\mathcal{C} \in \mathbb{R}^{n_1\times n_3 \times p}$

\FOR{$j = L$ {\textbf{downto}} $1$}
    \STATE Set: $\ell = p/\left(2^{j-1}\right)$
    \STATE $\mathcal{C}_{s_{j-1}}[:,:,2:2:\ell] = \mathcal{C}_{s_j} - \mathcal{C}_{d_j} / 2$
    \STATE $\mathcal{C}_{s_{j-1}}[:,:,1:2:(\ell-1)] = \mathcal{C}_{d_j} + \mathcal{C}_{s_{j-1}}[:,:,2:2:\ell]$
\ENDFOR
\STATE Finalize: $\mathcal{C} = \mathcal{C}_{s_0}$

\RETURN $\mathcal{C}$
\end{algorithmic}
\end{algorithm}

\begin{example}
    Let 
    $\mathcal{A}\in\mathbb{R}^{2\times3\times4}$ and $\mathcal{B}\in\mathbb{R}^{3\times2\times4}$ with frontal slices
{\small
$$\mathcal{A}^{(1)} = \begin{pmatrix}
3 & 0 & 2 \\
3 & 3 & 0
\end{pmatrix},~~ \mathcal{A}^{(2)} = \begin{pmatrix}
2 & 0 & 0 \\
1 & 1 & 1
\end{pmatrix},$$ 
$$\mathcal{A}^{(3)} = \begin{pmatrix}
0 & 1 & 4 \\
3 & 3 & 2
\end{pmatrix},~~ \mathcal{A}^{(4)} = \begin{pmatrix}
3 & 0 & 5 \\
2 & 2 & 5
\end{pmatrix}.$$
$$\mathcal{B}^{(1)} = \begin{pmatrix}
1 & 3 \\
1 & 2 \\
3 & 1
\end{pmatrix},~~ \mathcal{B}^{(2)} = \begin{pmatrix}
5 & 0 \\
0 & 0 \\
0 & 3
\end{pmatrix},$$
$$\mathcal{B}^{(3)} = \begin{pmatrix}
3 & 4 \\
0 & 4 \\
5 & 0
\end{pmatrix},~~ \mathcal{B}^{(4)} = \begin{pmatrix}
5 & 5 \\
5 & 4 \\
5 & 3
\end{pmatrix}.$$}

\noindent
Computing the level $1$ forward wavelet transform for tensor $\mathcal{A}$ and $\mathcal{B}$, we get,
{\small
$$\mathcal{A}_{s_1}^{(1)} = \begin{pmatrix}
2.5 & 0.0 & 1.0 \\
2.0 & 2.0 & 0.5
\end{pmatrix},~~\mathcal{A}_{s_1}^{(2)} = \begin{pmatrix}
1.5 & 0.5 & 4.5 \\
2.5 & 2.5 & 3.5
\end{pmatrix},$$

$$\mathcal{A}_{d_1}^{(1)} = \begin{pmatrix}
1 & 0 & 2 \\
2 & 2 & -1
\end{pmatrix},~~ \mathcal{A}_{d_1}^{(2)} = \begin{pmatrix}
-3 & 1 & -1 \\
1 & 1 & -3
\end{pmatrix}.$$

$$\mathcal{B}_{s_1}^{(1)} = \begin{pmatrix}
3.0 & 1.5 \\
0.5 & 1.0 \\
1.5 & 2.0
\end{pmatrix},~~ \mathcal{B}_{s_1}^{(2)} = \begin{pmatrix}
4.0 & 4.5 \\
2.5 & 4.0 \\
5.0 & 1.5
\end{pmatrix},$$

$$\mathcal{B}_{d_1}^{(1)} = \begin{pmatrix}
-4 & 3 \\
1 & 2 \\
3 & -2
\end{pmatrix},~~ \mathcal{B}_{d_1}^{(2)} = \begin{pmatrix}
-2 & -1 \\
-5 & 0 \\
0 & -3
\end{pmatrix}.$$}

\noindent
Computing the level $2$ forward wavelet transform for tensor $\mathcal{A}_{s_1}$ and $\mathcal{B}_{s_1}$, we get,
{\small
$$\mathcal{A}_{s_2} = \begin{pmatrix}
2.00 & 0.25 & 2.75 \\
2.25 & 2.25 & 2.00
\end{pmatrix},~~\mathcal{A}_{d_2} = \begin{pmatrix}
1.0 & -0.5 & -3.5 \\
-0.5 & -0.5 & -3.0
\end{pmatrix}.$$
$$\mathcal{B}_{s_2} = \begin{pmatrix}
3.50 & 3.00 \\
1.50 & 2.50 \\
3.25 & 1.75
\end{pmatrix}, ~~\mathcal{B}_{d_2} = \begin{pmatrix}
-1.0 & -3.0 \\
-2.0 & -3.0 \\
-3.5 & 0.5
\end{pmatrix}.$$}

\noindent
Now, by multiplying $\mathcal{A}_{s_2}$ with $\mathcal{B}_{s_2}$ face-wise and similarly multiplying $\mathcal{A}_{d_2}$ with $\mathcal{B}_{d_2}$ and $\mathcal{A}_{d_1}$ with $\mathcal{B}_{d_1}$, we get,
{\small
$$\mathcal{C}_{d_1}^{(1)} = \begin{pmatrix}
2 & -1 \\
-9 & 12
\end{pmatrix},~~ \mathcal{C}_{d_1}^{(2)} = \begin{pmatrix}
1 & 6 \\
-7 & 8
\end{pmatrix}.$$
$$\mathcal{C}_{s_2} = \begin{pmatrix}
16.3125 & 11.4375 \\
17.7500 & 15.8750
\end{pmatrix},~~\mathcal{C}_{d_2} = \begin{pmatrix}
12.2500 & -3.2500 \\
12.0000 & 1.5000
\end{pmatrix}.$$}

\noindent
Now, by applying inverse wavelet transformation to the tensors $\mathcal{C}_{s_2}$ and $\mathcal{C}_{d_2}$, we get the resulting tensor $\mathcal{C}_{s_1}$ as,
{\small
$$\mathcal{C}_{s_1}^{(1)} = \begin{pmatrix}
22.4375 & 9.8125 \\
23.7500 & 16.6250
\end{pmatrix},~~\mathcal{C}_{s_1}^{(2)} = \begin{pmatrix}
10.1875 & 13.0625 \\
11.7500 & 15.1250
\end{pmatrix}.$$}

\noindent
Furthermore, by applying inverse wavelet transformation to tensors $\mathcal{C}_{s_1}$ and $\mathcal{C}_{d_1}$, we get the resulting tensor $\mathcal{C}$,
{\small
$$\mathcal{C}^{(1)} = \begin{pmatrix}
23.4375 & 9.3125 \\
19.2500 & 22.6250
\end{pmatrix},~~\mathcal{C}^{(2)} = \begin{pmatrix}
21.4375 & 10.3125 \\
28.2500 & 10.6250
\end{pmatrix},$$
$$\mathcal{C}^{(3)} = \begin{pmatrix}
10.6875 & 16.0625 \\
8.2500 & 19.1250
\end{pmatrix},~~\mathcal{C}^{(4)} = \begin{pmatrix}
9.6875 & 10.0625 \\
15.2500 & 11.1250
\end{pmatrix}.$$}
\end{example}

The linearity of $W_{[L]}$ along the third dimension is preserved according to Lemma~\ref{lem:linearity}, since the wavelet transformation 
is applied independently along the third dimension. Some of the important properties of the $w$-product are as follows,

\begin{definition}[Transpose]\label{def:transpose}
    The transpose of the tensor $\mathcal{A}\in \mathbb{R}^{n_1\times n_2\times p}$ is
    $\mathcal{A}^\top[i,j,:] = \mathcal{A}[j,i,:].$
\end{definition}

Due to the application of forward and inverse wavelet transformations along the third dimension, the transpose operation remains invariant to wavelet transformation, which does not hold in the $t$-product framework. 

\begin{definition}[Identity]\label{def:identity}
    The identity tensor $\mathcal{I} \in \mathbb{R}^{n \times n \times p}$ with $p = 2^L m$, where $m, L \in \mathbb{N}$, in the wavelet domain, is,
    \begin{equation}
    \begin{split}
        &\mathcal{I}_{s_j}[:, :, k] = I_n, \\ 
        &\mathcal{I}_{d_j}[:, :, k] = I_n, \quad \text{for all } j,k.
    \end{split}
    \end{equation}
    Then, we have $\mathcal{A} \star_w \mathcal{I} = \mathcal{I} \star_w \mathcal{A} = \mathcal{A}$. The tensor $\mathcal{I}$ can be computed easily using the invertibility property of second-generation wavelets, as given by Eq.~\eqref{eq:2ndgen_wavelet_inverse}.
\end{definition}

\begin{definition}[Inverse]\label{def:inverse}
    The inverse tensor $\mathcal{A}^{-1} \in \mathbb{R}^{n \times n \times p}$ with $p = 2^L m$, where $m, L \in \mathbb{N}$, in the wavelet domain is,
    \begin{equation}
    \begin{split}
        &\mathcal{A}_{s_j}^{-1}[:, :, k] = \left(\mathcal{A}_{s_j}[:, :, k]\right)^{-1}, \\ 
        &\mathcal{A}_{d_j}^{-1}[:, :, k] = \left(\mathcal{A}_{d_j}[:, :, k]\right)^{-1}, \quad \text{for all } j,k.
    \end{split}
    \end{equation}
    Then, it holds that $\mathcal{A} \star_w \mathcal{A}^{-1} = \mathcal{A}^{-1} \star_w \mathcal{A} = \mathcal{I}$. Similar to the identity tensor, the tensor $\mathcal{A}^{-1}$ can be computed using the invertibility property of second-generation wavelets as given by Eq.~\ref{eq:2ndgen_wavelet_inverse}. The double inverse property directly holds from here, $(\mathcal{A}^{-1})^{-1} = \mathcal{A}$. Proof given in the following Lemma.
\end{definition}
\begin{lemma}[Inverse Property]\label{lem:inverse}
    For a tensor $\mathcal{A} \in \mathbb{R}^{n \times n \times p}$ with $p = 2^L m$, where $m,L \in \mathbb{N}$, it holds that
    \begin{equation}
        \left(\mathcal{A}^{-1}\right)^{-1} = \mathcal{A}.
    \end{equation}
\end{lemma}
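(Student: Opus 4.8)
The plan is to reduce this tensor identity to the familiar matrix involution $(M^{-1})^{-1} = M$ applied slice-by-slice in the wavelet domain, and then to invoke perfect reconstruction to lift the resulting equality back to the spatial domain.

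First I would recall from Definition~\ref{def:inverse} that the inverse tensor is specified \emph{entirely} through its wavelet-domain slices, namely $\mathcal{A}_{s_j}^{-1}[:,:,k] = \left(\mathcal{A}_{s_j}[:,:,k]\right)^{-1}$ and $\mathcal{A}_{d_j}^{-1}[:,:,k] = \left(\mathcal{A}_{d_j}[:,:,k]\right)^{-1}$ for every level $j$ and every frontal index $k$. This presupposes that each smooth and detail matrix is nonsingular, so I would state this invertibility assumption explicitly at the outset to ensure that $\mathcal{A}^{-1}$, and subsequently $(\mathcal{A}^{-1})^{-1}$, are well defined.

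Next I would apply the same definition with $\mathcal{A}^{-1}$ in place of $\mathcal{A}$. The wavelet-domain smooth slices of $(\mathcal{A}^{-1})^{-1}$ are then $\left(\mathcal{A}^{-1}_{s_j}[:,:,k]\right)^{-1} = \left(\left(\mathcal{A}_{s_j}[:,:,k]\right)^{-1}\right)^{-1}$, and analogously for the detail slices. Since ordinary matrix inversion is an involution on invertible matrices, each doubly-inverted slice collapses to $\mathcal{A}_{s_j}[:,:,k]$ (respectively $\mathcal{A}_{d_j}[:,:,k]$). Hence $(\mathcal{A}^{-1})^{-1}$ and $\mathcal{A}$ possess identical wavelet-domain representations at every level and in every frontal slice.

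Finally I would conclude using the perfect reconstruction property of Lemma~\ref{lem:perfect_reconstruction}: because $W_{[L]}$ is a bijection with $W_{[L]}^{-1}\left(W_{[L]}(\cdot)\right) = \mathrm{id}$, two tensors whose full collections of smooth and detail coefficients coincide must themselves be equal, yielding $(\mathcal{A}^{-1})^{-1} = \mathcal{A}$. The argument is essentially mechanical, and I expect the only point demanding genuine care to be the bookkeeping of nonsingularity across all wavelet-domain slices, since the statement is meaningful precisely on the subset of tensors whose complete wavelet representation consists of invertible matrices.
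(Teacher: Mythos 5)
Your proposal is correct and follows essentially the same route as the paper's proof: invert each wavelet-domain smooth and detail slice twice, use the involutive property of matrix inversion, and apply the inverse wavelet transform (perfect reconstruction) to recover $\mathcal{A}$. Your explicit remark about the nonsingularity of every slice is a small but worthwhile addition that the paper leaves implicit.
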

\begin{proof}
    From Definition~\ref{def:inverse}, at any level $j=1,2,\dots,L$, the inverse components satisfy,
    $$(\mathcal{A}^{-1})_{d_j}[:,:,k] = (\mathcal{A}_{d_j}[:,:,k])^{-1}.$$
    Taking the inverse again, we get,
    \begin{equation*}
    \begin{split}
        ((\mathcal{A}^{-1})^{-1})_{d_j}[:,:,k] &= \bigl((\mathcal{A}^{-1})_{d_j}[:,:,k]\bigr)^{-1} \\
        &= \bigl((\mathcal{A}_{d_j}[:,:,k])^{-1}\bigr)^{-1} = \mathcal{A}_{d_j}[:,:,k].
    \end{split}
    \end{equation*}
    Similarly,
    $$((\mathcal{A}^{-1})^{-1})_{s_L}[:,:,k] = \mathcal{A}_{s_L}[:,:,k].$$
    Applying the inverse wavelet transform then yields,
    $$(\mathcal{A}^{-1})^{-1} = W_{[L]}^{-1}(\mathcal{A}_{s_L}, \{\mathcal{A}_{d_j}\}_{j=1}^L) = \mathcal{A}.$$
    Therefore, the lemma is proved.
\end{proof}

\begin{definition}[Orthogonal Tensor]\label{def:othotensor}
    The orthogonal tensor $\mathcal{Q} \in \mathbb{R}^{n \times n \times p}$ with $p = 2^L m$, where $m, L \in \mathbb{N}$, in the wavelet domain, is defined using orthogonal matrices $Q_n$,
    \begin{equation}
    \begin{split}
        &\mathcal{Q}_{s_j}[:, :, k] = Q_n, \\ 
        &\mathcal{Q}_{d_j}[:, :, k] = Q_n, \quad \text{for all } j,k.
    \end{split}
    \end{equation}
    such that $Q_n^\top Q_n = Q_nQ_n^\top = I_n$ where, $I_n,Q_n\in\mathbb{R}^{n\times n}$ and $I_n$ is identity matrix. The orthogonal tensor $\mathcal{Q}$ can be computed using the invertibility property given by Eq.~\eqref{eq:2ndgen_wavelet_inverse}.
\end{definition}

\begin{theorem}[Transpose Invariance Property]\label{thrm:transposeProp}
    For a tensor $\mathcal{A} \in \mathbb{R}^{n_1 \times n_2 \times p}$, the transpose operation commutes with the wavelet transform and its inverse in the spatial domain, i.e.,
    \begin{equation}
    \begin{split}
        &W_{[L]}(\mathcal{A}^\top) = \bigl(W_{[L]}(\mathcal{A})\bigr)^\top, \\
        &W_{[L]}^{-1}(\mathcal{A}^\top) = \bigl(W_{[L]}^{-1}(\mathcal{A})\bigr)^\top.
    \end{split}
    \end{equation}
\end{theorem}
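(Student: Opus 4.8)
The plan is to exploit the fact that the forward transform $W_{[L]}$ acts \emph{only} along the third mode, whereas the transpose of Definition~\ref{def:transpose} acts \emph{only} on the first two indices. Since the two operations touch disjoint index sets, they ought to commute essentially by bookkeeping, and the proof reduces to a careful index chase.

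First I would make precise the third-mode separability of $W_{[L]}$. Inspecting the lifting steps in Algorithm~\ref{alg:w_product}, each detail/smooth update has the form $\mathcal{A}_{s_{j-1}}[:,:,\cdot] \pm \tfrac{1}{2}\mathcal{A}_{d_j}$ and is applied uniformly to every frontal position $[:,:,\cdot]$, mixing only entries that share the same $(i,j)$ label. Hence there is a single linear map $\mathcal{W}\colon \mathbb{R}^{p}\to\mathbb{R}^{\frac{p}{2}}\times\cdots\times\mathbb{R}^{\frac{p}{2^L}}$, independent of $(i,j)$, such that
$$W_{[L]}(\mathcal{A})[i,j,:] = \mathcal{W}\bigl(\mathcal{A}[i,j,:]\bigr) \quad\text{for all } i,j,$$
where I interpret $W_{[L]}(\mathcal{A})$ as the tuple $(\mathcal{A}_{s_L},\mathcal{A}_{d_L},\dots,\mathcal{A}_{d_1})$ and its transpose as the blockwise transpose of each coefficient slice.

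Next I would combine this with Definition~\ref{def:transpose}. For any $(i,j)$, the fiber identity $\mathcal{A}^\top[i,j,:] = \mathcal{A}[j,i,:]$ gives
\begin{align*}
    W_{[L]}(\mathcal{A}^\top)[i,j,:] &= \mathcal{W}\bigl(\mathcal{A}^\top[i,j,:]\bigr) = \mathcal{W}\bigl(\mathcal{A}[j,i,:]\bigr) \\
    &= W_{[L]}(\mathcal{A})[j,i,:] = \bigl(W_{[L]}(\mathcal{A})\bigr)^\top[i,j,:].
\end{align*}
Since this holds for every $(i,j)$, the first identity follows. The second identity for $W_{[L]}^{-1}$ is proved identically: the inverse lifting steps are again uniform in $[:,:,\cdot]$, so $W_{[L]}^{-1}$ is represented by the same type of $(i,j)$-independent linear map along the third mode, and the same chain of equalities applies verbatim.

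The main obstacle is conceptual rather than computational: one must verify that the transpose of Definition~\ref{def:transpose} does \emph{not} permute entries along the third dimension (unlike the $t$-product transpose, which additionally reverses the frontal slices). It is precisely this tube-preserving nature of the transpose, together with the third-mode separability of the lifting transform, that makes the two operations act on disjoint coordinates and therefore commute. Once this observation is secured, no estimates are required and the statement follows directly.
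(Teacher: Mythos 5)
Your proposal is correct and takes essentially the same route as the paper: both arguments rest on the observation that the lifting steps act only along the third mode while the transpose of Definition~\ref{def:transpose} swaps only the first two indices, so the two operations commute by an index chase (the paper verifies this at level $1$ and inducts, whereas you package it as a single $(i,j)$-independent fiber map, which is a presentational rather than substantive difference). No gap.
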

\begin{proof}
    It is known that the operators $W_j$ and $W_j^{-1}$ act independently along the third dimension, while the transpose swaps the first two dimensions. Hence, for level $1$, we have,
    \begin{equation*}
    \begin{split}
        (\mathcal{A}^\top)&_{s_1}[i,j,k] = \mathcal{A}^\top[i,j,2k-1] - \mathcal{A}^\top[i,j,2k] \\
        &= \mathcal{A}[j,i,2k-1] - \mathcal{A}[j,i,2k] = (\mathcal{A}_{s_1})^\top[i,j,k].
    \end{split}
    \end{equation*}
    Similarly, $(\mathcal{A}^\top)_{d_1}[i,j,k] = (\mathcal{A}_{d_1})^\top[i,j,k]$. By induction, it follows that for all levels $j$,
    $$(\mathcal{A}^\top)_{s_j} = (\mathcal{A}_{s_j})^\top \quad \text{and} \quad (\mathcal{A}^\top)_{d_j} = (\mathcal{A}_{d_j})^\top.$$
    Combining all levels, we obtain,
    $W_{[L]}(\mathcal{A}^\top) = (W_{[L]}(\mathcal{A}))^\top.$
    Since the inverse wavelet transform is also linear and operates independently on the third dimension, we have,
    $$W_{[L]}^{-1}(\mathcal{A}^\top) = (W_{[L]}^{-1}(\mathcal{A}))^\top.$$
    This completes the proof.
\end{proof}

\begin{theorem}[Transpose Reversal Property]\label{thrm:transpose_reversal}
    For two tensors $\mathcal{A} \in \mathbb{R}^{n_1 \times n_2 \times p}$ and $\mathcal{B} \in \mathbb{R}^{n_2 \times n_3 \times p}$, the transpose of their wavelet product satisfies,
    \begin{equation}
        \left(\mathcal{A} \star_w \mathcal{B}\right)^\top = \mathcal{B}^\top \star_w \mathcal{A}^\top.
    \end{equation}
\end{theorem}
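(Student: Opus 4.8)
The plan is to expand the left-hand side using Definition~\ref{def:wproduct} and then push the transpose inward through each constituent operation, invoking the transpose invariance of Theorem~\ref{thrm:transposeProp} together with a frontal-slice-wise reversal rule for the face-product $\Delta$. First I would write
\begin{equation*}
    (\mathcal{A}\star_w\mathcal{B})^\top = \Bigl(W_{[L]}^{-1}\bigl(W_{[L]}(\mathcal{A})\,\Delta\, W_{[L]}(\mathcal{B})\bigr)\Bigr)^\top.
\end{equation*}
By Theorem~\ref{thrm:transposeProp}, the transpose commutes with the inverse wavelet transform, so the right-hand side equals $W_{[L]}^{-1}\bigl((W_{[L]}(\mathcal{A})\,\Delta\, W_{[L]}(\mathcal{B}))^\top\bigr)$, moving the entire problem into the wavelet domain.

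The key intermediate fact I need is a transpose-reversal rule for the face-product: for two wavelet-domain objects $\widehat{\mathcal{X}}$ and $\widehat{\mathcal{Y}}$ with compatible frontal-slice dimensions, $(\widehat{\mathcal{X}}\,\Delta\,\widehat{\mathcal{Y}})^\top = \widehat{\mathcal{Y}}^\top \,\Delta\, \widehat{\mathcal{X}}^\top$. Because the tensor transpose of Definition~\ref{def:transpose} transposes each frontal slice, and because $\Delta$ multiplies corresponding frontal slices as ordinary matrices, this reduces to the classical identity $(XY)^\top = Y^\top X^\top$ applied independently to every frontal slice at every resolution level $s_L, d_L, \dots, d_1$. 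Applying this with $\widehat{\mathcal{X}} = W_{[L]}(\mathcal{A})$ and $\widehat{\mathcal{Y}} = W_{[L]}(\mathcal{B})$ gives $(W_{[L]}(\mathcal{A})\,\Delta\, W_{[L]}(\mathcal{B}))^\top = (W_{[L]}(\mathcal{B}))^\top \,\Delta\, (W_{[L]}(\mathcal{A}))^\top$.

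Then I would invoke Theorem~\ref{thrm:transposeProp} a second time, now to commute the transpose with the \emph{forward} wavelet transform, so that $(W_{[L]}(\mathcal{B}))^\top = W_{[L]}(\mathcal{B}^\top)$ and likewise $(W_{[L]}(\mathcal{A}))^\top = W_{[L]}(\mathcal{A}^\top)$. Substituting back yields $W_{[L]}^{-1}\bigl(W_{[L]}(\mathcal{B}^\top)\,\Delta\, W_{[L]}(\mathcal{A}^\top)\bigr)$, which is exactly $\mathcal{B}^\top \star_w \mathcal{A}^\top$ by Definition~\ref{def:wproduct}, completing the argument.

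The main obstacle is the face-product reversal rule. Although it is morally just $(XY)^\top = Y^\top X^\top$, I must be careful that the wavelet-domain object is a tuple of coefficient tensors across all levels, so the reversal has to hold slice-by-slice and level-by-level simultaneously; I also need the dimension bookkeeping to be consistent, with slice products of shape $(n_1\times n_2)(n_2\times n_3)$ on the left becoming $(n_3\times n_2)(n_2\times n_1)$ on the right after reversal. Everything else is a direct double application of Theorem~\ref{thrm:transposeProp}, so once the face-product rule is stated cleanly the result follows immediately.
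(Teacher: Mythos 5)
Your proposal is correct and follows essentially the same route as the paper's proof: both pass to the wavelet domain, apply the slice-wise matrix identity $(XY)^\top = Y^\top X^\top$ to the face-product at every level, and use Theorem~\ref{thrm:transposeProp} to commute the transpose with the forward and inverse wavelet transforms. Your explicit statement of the face-product reversal rule and the dimension bookkeeping is a slightly more careful write-up of the same argument.
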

\begin{proof}
    Assume the wavelet transforms,
    $$W_{[L]}(\mathcal{A}) = (\mathcal{A}_{s_L}, \{\mathcal{A}_{d_j}\}_{j=1}^L),~~W_{[L]}(\mathcal{B}) = (\mathcal{B}_{s_L}, \{\mathcal{B}_{d_j}\}_{j=1}^L).$$
    By definition of the product $\mathcal{C} = \mathcal{A} \star_w \mathcal{B}$, for each level $j = 1, 2, \dots, L$,
    $$\mathcal{C}_{d_j} = \mathcal{A}_{d_j} \Delta \mathcal{B}_{d_j}, \quad \mathcal{C}_{s_L} = \mathcal{A}_{s_L} \Delta \mathcal{B}_{s_L},$$
    and,
    $$\mathcal{C} = W_{[L]}^{-1} \left( \mathcal{C}_{s_L}, \{\mathcal{C}_{d_j}\}_{j=1}^L \right).$$
    From Theorem~\ref{thrm:transposeProp}, we have
    $\mathcal{C}^\top = W_{[L]}^{-1} \left( \mathcal{C}_{s_L}^\top, \{\mathcal{C}_{d_j}^\top\}_{j=1}^L \right).$
    For each level $j$ and slice $k$, using transpose properties of matrix multiplication,
    \begin{equation*}
        \begin{split}
            (\mathcal{C}_{d_j}[:,:,k])^\top &= (\mathcal{A}_{d_j}[:,:,k]\mathcal{B}_{d_j}[:,:,k])^\top\\ &= (\mathcal{B}_{d_j}[:,:,k])^\top(\mathcal{A}_{d_j}[:,:,k])^\top\\ &= (\mathcal{B}^\top)_{d_j}[:,:,k](\mathcal{A}^\top)_{d_j}[:,:,k]
        \end{split}
    \end{equation*}
    Similarly, $(\mathcal{C}_{s_L}[:,:,k])^\top = (\mathcal{B}^\top)_{s_L}[:,:,k] (\mathcal{A}^\top)_{s_L}[:,:,k].$
    Therefore,
    $$\mathcal{C}^\top = W_{[L]}^{-1} \left( W_{[L]}(\mathcal{B}^\top \star_w \mathcal{A}^\top) \right) = \mathcal{B}^\top \star_w \mathcal{A}^\top,$$
    which completes the proof.
\end{proof}

\begin{theorem}[Inverse Reversal Property]\label{thrm:inverse_reversal}
    For two invertible tensors $\mathcal{A} \in \mathbb{R}^{n \times n \times p}$ and $\mathcal{B} \in \mathbb{R}^{n \times n \times p}$, the inverse of their wavelet product $\star_w$ satisfies
    \begin{equation}
        \left(\mathcal{A} \star_w \mathcal{B}\right)^{-1} = \mathcal{B}^{-1} \star_w \mathcal{A}^{-1}.
    \end{equation}
\end{theorem}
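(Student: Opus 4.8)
The plan is to establish the identity by the standard uniqueness-of-inverse argument: rather than computing $(\mathcal{A}\star_w\mathcal{B})^{-1}$ directly, I would take the candidate $\mathcal{X} = \mathcal{B}^{-1}\star_w\mathcal{A}^{-1}$ and verify that it satisfies the defining relations $(\mathcal{A}\star_w\mathcal{B})\star_w\mathcal{X} = \mathcal{X}\star_w(\mathcal{A}\star_w\mathcal{B}) = \mathcal{I}$. The tools I intend to use are all already available in the excerpt: associativity of the $w$-product (Theorem~\ref{thrm:associativity}), the inverse relations $\mathcal{A}\star_w\mathcal{A}^{-1} = \mathcal{A}^{-1}\star_w\mathcal{A} = \mathcal{I}$ and $\mathcal{B}\star_w\mathcal{B}^{-1} = \mathcal{B}^{-1}\star_w\mathcal{B} = \mathcal{I}$ from Definition~\ref{def:inverse}, and the identity-absorption property $\mathcal{A}\star_w\mathcal{I} = \mathcal{I}\star_w\mathcal{A} = \mathcal{A}$ from Definition~\ref{def:identity}.

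The core computation is a short chain of rewrites. First I would regroup using associativity,
\begin{equation*}
(\mathcal{A}\star_w\mathcal{B})\star_w(\mathcal{B}^{-1}\star_w\mathcal{A}^{-1}) = \mathcal{A}\star_w\bigl(\mathcal{B}\star_w\mathcal{B}^{-1}\bigr)\star_w\mathcal{A}^{-1},
\end{equation*}
then collapse the inner pair to the identity and absorb it,
\begin{equation*}
\mathcal{A}\star_w\mathcal{I}\star_w\mathcal{A}^{-1} = \mathcal{A}\star_w\mathcal{A}^{-1} = \mathcal{I}.
\end{equation*}
The mirror-image calculation $(\mathcal{B}^{-1}\star_w\mathcal{A}^{-1})\star_w(\mathcal{A}\star_w\mathcal{B}) = \mathcal{I}$ proceeds identically, regrouping so that $\mathcal{A}^{-1}\star_w\mathcal{A}$ cancels to $\mathcal{I}$ and then $\mathcal{B}^{-1}\star_w\mathcal{B} = \mathcal{I}$. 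Having verified both relations, uniqueness of the $w$-inverse gives $(\mathcal{A}\star_w\mathcal{B})^{-1} = \mathcal{B}^{-1}\star_w\mathcal{A}^{-1}$.

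An equivalent route, parallel to the proof of Theorem~\ref{thrm:transpose_reversal}, would be to pass to the wavelet domain and work face-wise: at each level $j$ and slice $k$ the $w$-product reduces to ordinary matrix multiplication, so the classical reversal $(\mathcal{A}_{d_j}[:,:,k]\,\mathcal{B}_{d_j}[:,:,k])^{-1} = (\mathcal{B}_{d_j}[:,:,k])^{-1}(\mathcal{A}_{d_j}[:,:,k])^{-1}$ holds on every face (and likewise on the smooth block $s_L$), after which linearity and invertibility of $W_{[L]}^{-1}$ reassemble the claim. The only point requiring care — and the one I would flag as the main obstacle — is well-definedness: I must confirm that $\mathcal{A}\star_w\mathcal{B}$ is actually invertible so that the statement is meaningful. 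This follows because invertibility of $\mathcal{A}$ and $\mathcal{B}$ means each wavelet-domain face matrix of both is nonsingular, hence each face matrix of their product is a product of nonsingular matrices and therefore nonsingular, so the $w$-inverse of $\mathcal{A}\star_w\mathcal{B}$ exists by Definition~\ref{def:inverse}.
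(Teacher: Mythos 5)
Your proposal is correct, but your primary route differs from the paper's. The paper disposes of this theorem in two sentences, deferring to ``a similar argument as in Theorem~\ref{thrm:transpose_reversal}'': that is, it implicitly passes to the wavelet domain and applies the matrix identity $(MN)^{-1}=N^{-1}M^{-1}$ face-by-face on each $\mathcal{A}_{d_j}[:,:,k]\,\mathcal{B}_{d_j}[:,:,k]$ and on the smooth block, then reassembles via $W_{[L]}^{-1}$ --- exactly the ``equivalent route'' you sketch second. Your main argument is instead the abstract one: verify that $\mathcal{B}^{-1}\star_w\mathcal{A}^{-1}$ is a two-sided $\star_w$-inverse of $\mathcal{A}\star_w\mathcal{B}$ using only associativity (Theorem~\ref{thrm:associativity}), the identity-absorption property, and the defining relations of Definition~\ref{def:inverse}, then invoke uniqueness of two-sided inverses (which itself follows from associativity by the standard monoid argument, a step worth stating explicitly since the paper never records it). The abstract route buys generality --- it would survive any change to the underlying transform so long as associativity and the identity/inverse axioms hold --- while the paper's face-wise route is shorter and makes the mechanism transparent. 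You also add something the paper omits entirely: the check that $\mathcal{A}\star_w\mathcal{B}$ is in fact invertible (each wavelet-domain face of the product is a product of nonsingular matrices), without which the statement is not even well posed. That is a genuine improvement in rigor over the paper's two-line sketch.
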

\begin{proof}
    The proof follows by applying a similar argument as in Theorem~\ref{thrm:transpose_reversal} together with the definition of the inverse as given in Definition~\ref{def:inverse}. 
    Specifically, the reversal of the order in the wavelet product inverse is a direct consequence of these properties.
\end{proof}

\begin{theorem}[Associativity]\label{thrm:associativity}
    For three tensors $\mathcal{A} \in \mathbb{R}^{n_1 \times n_2 \times p}$, $\mathcal{B} \in \mathbb{R}^{n_2 \times n_3 \times p}$, and $\mathcal{C} \in \mathbb{R}^{n_3 \times n_4 \times p}$, the wavelet product satisfies associativity,
    \begin{equation}
        \left(\mathcal{A} \star_w \mathcal{B}\right) \star_w \mathcal{C} = \mathcal{A} \star_w \left(\mathcal{B} \star_w \mathcal{C}\right).
    \end{equation}
\end{theorem}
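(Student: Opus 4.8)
The plan is to reduce the associativity of $\star_w$ to the associativity of ordinary matrix multiplication inside the wavelet domain, using the perfect reconstruction property (Lemma~\ref{lem:perfect_reconstruction}) to collapse the nested transforms. The key observation is that the face-product $\Delta$ acts independently on each frontal slice of each wavelet coefficient block as an ordinary matrix product, so its associativity is inherited directly from that of matrices.

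First I would apply $W_{[L]}$ to a single $w$-product and invoke perfect reconstruction to obtain
\begin{equation*}
    W_{[L]}(\mathcal{A}\star_w\mathcal{B}) = W_{[L]}\bigl(W_{[L]}^{-1}(W_{[L]}(\mathcal{A})\Delta W_{[L]}(\mathcal{B}))\bigr) = W_{[L]}(\mathcal{A})\Delta W_{[L]}(\mathcal{B}).
\end{equation*}
This identity is what lets me push the outer transform through the intermediate product in both nesting orders. Expanding the two groupings then gives
\begin{equation*}
    (\mathcal{A}\star_w\mathcal{B})\star_w\mathcal{C} = W_{[L]}^{-1}\Bigl((W_{[L]}(\mathcal{A})\Delta W_{[L]}(\mathcal{B}))\Delta W_{[L]}(\mathcal{C})\Bigr),
\end{equation*}
and
\begin{equation*}
    \mathcal{A}\star_w(\mathcal{B}\star_w\mathcal{C}) = W_{[L]}^{-1}\Bigl(W_{[L]}(\mathcal{A})\Delta(W_{[L]}(\mathcal{B})\Delta W_{[L]}(\mathcal{C}))\Bigr).
\end{equation*}
Since $W_{[L]}^{-1}$ is a single well-defined map, it suffices to show that the two arguments agree, i.e., that $\Delta$ is associative on the triple of coefficient blocks.

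The last step is to verify associativity of $\Delta$ slice by slice. At each level $j$ and each frontal index $k$, the operation is the matrix product $\mathcal{A}_{d_j}[:,:,k]\,\mathcal{B}_{d_j}[:,:,k]$ (and analogously for the smooth block $s_L$), and the dimensions $n_1\times n_2$, $n_2\times n_3$, $n_3\times n_4$ chain correctly, so that both groupings produce the same $n_1\times n_4$ matrix by the associativity of matrix multiplication. Because $\Delta$ pairs only matching coefficient blocks (same level, same slice), the identity holds for every $(j,k)$ simultaneously, and hence the two wavelet-domain arguments above are identical.

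I do not expect a deep obstacle here; the main thing to be careful about is bookkeeping. Specifically, I would confirm that the face-product pairs matching coefficient blocks so that matrix conformability holds throughout the chain, and that the linearity (Lemma~\ref{lem:linearity}) and perfect reconstruction of $W_{[L]}$ guarantee that the single inverse transform recovers the same spatial tensor from the two sets of identical wavelet-domain data. Once these routine points are checked, applying $W_{[L]}^{-1}$ to the common argument yields the claimed equality and completes the proof.
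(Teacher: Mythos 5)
Your proposal is correct and follows essentially the same route as the paper: both reduce associativity of $\star_w$ to the associativity of ordinary matrix multiplication applied slice-by-slice in the wavelet domain, with the paper carrying this out level-by-level on the $d_j$ and $s_L$ blocks while you phrase it once through the identity $W_{[L]}(\mathcal{A}\star_w\mathcal{B}) = W_{[L]}(\mathcal{A})\Delta W_{[L]}(\mathcal{B})$. The only point worth noting is that this identity needs $W_{[L]}\circ W_{[L]}^{-1}=\mathrm{id}$ rather than the composition stated in Lemma~\ref{lem:perfect_reconstruction}, which is immediate since $W_{[L]}$ is an invertible linear map on a finite-dimensional space (the paper relies on the same fact implicitly).
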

\begin{proof}
    At any level $j=1,2,\dots,L$, consider the left-hand side (LHS),
    \begin{equation*}
    \begin{split}
        ((\mathcal{A} \star_w \mathcal{B}) \star_w \mathcal{C})_{d_j}&[:,:,k] \\&= (\mathcal{A} \star_w \mathcal{B})_{d_j}[:,:,k] \, \mathcal{C}_{d_j}[:,:,k] \\
        &= (\mathcal{A}_{d_j}[:,:,k] \, \mathcal{B}_{d_j}[:,:,k]) \, \mathcal{C}_{d_j}[:,:,k].
    \end{split}
    \end{equation*}
    Now consider the right-hand side (RHS),
    \begin{equation*}
    \begin{split}
        (\mathcal{A} \star_w (\mathcal{B} \star_w \mathcal{C}))_{d_j}&[:,:,k]\\ &= \mathcal{A}_{d_j}[:,:,k] \, (\mathcal{B} \star_w \mathcal{C})_{d_j}[:,:,k] \\
        &= \mathcal{A}_{d_j}[:,:,k] \, (\mathcal{B}_{d_j}[:,:,k] \, \mathcal{C}_{d_j}[:,:,k]).
    \end{split}
    \end{equation*}
    By the associativity of matrix multiplication, it follows that,
    $$(\mathcal{A}_{d_j} \Delta \mathcal{B}_{d_j}) \Delta \mathcal{C}_{d_j} = \mathcal{A}_{d_j} \Delta (\mathcal{B}_{d_j} \Delta \mathcal{C}_{d_j}),
    \quad \text{for all levels } j.$$
    Hence,
    $$((\mathcal{A} \star_w \mathcal{B}) \star_w \mathcal{C})_{d_j} = (\mathcal{A} \star_w (\mathcal{B} \star_w \mathcal{C}))_{d_j}.$$
    Similarly, for the smooth coefficient at level $L$,
    $$((\mathcal{A} \star_w \mathcal{B}) \star_w \mathcal{C})_{s_L} = (\mathcal{A} \star_w (\mathcal{B} \star_w \mathcal{C}))_{s_L}.$$
    Applying the inverse wavelet transform on all levels yields,
    $$\left(\mathcal{A} \star_w \mathcal{B}\right) \star_w \mathcal{C} = \mathcal{A} \star_w \left(\mathcal{B} \star_w \mathcal{C}\right).$$
    This completes the proof.
\end{proof}


\begin{theorem}[Singular Value Decomposition]\label{thrm:svd}
    Let $\mathcal{A}\in\mathbb{R}^{n_1\times n_2\times p}$ be a third-order tensor with $p=2^Lm$, where, $m,L\in\mathbb{N}$. Then, there exist tensors $\mathcal{U}\in\mathbb{R}^{n_1\times r\times p}$, $\Sigma\in\mathbb{R}^{r\times r\times p}$, and $\mathcal{V}\in\mathbb{R}^{n_2\times r\times p}$ such that $\mathcal{A}$ admits a low-rank approximation,
    \begin{equation}
        \mathcal{A}^{(r)} \approx \mathcal{U}\star_w \Sigma\star_w \mathcal{V}^\top
    \end{equation}
    where $\star_w$ is the $w$-product and, $\mathcal{U}$ and $\mathcal{V}$ are orthogonal tensors. For any rank $r\leq\min\{n_1,n_2\}$, $\mathcal{A}^{(r)}$ satisfies,
    {\small\begin{equation}
        \|\mathcal{A} - \mathcal{A}^{(r)}\|_F\leq \sqrt{\sum_{i = r+1}^{\min\{n_1,n_2\}} \left(\sum_{j,k}\left(\sigma_{{d_j},k,i}^2\right) + \sum_k \left(\sigma_{{s_L},k,i}^2\right)\right) }
    \end{equation}}
\end{theorem}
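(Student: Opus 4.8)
The plan is to reduce the tensor decomposition to a family of ordinary matrix SVDs living in the wavelet domain, apply the Eckart--Young theorem independently on each coefficient slice, and then carry the resulting error estimate back to the spatial domain through the inverse transform. The first step is to unfold the triple $w$-product. By Definition~\ref{def:wproduct}, $\star_w$ acts as an independent matrix product on each wavelet-domain slice, and by the transpose invariance of Theorem~\ref{thrm:transposeProp} the factor $\mathcal{V}^\top$ transforms slicewise into $(\mathcal{V}_{\bullet}[:,:,k])^\top$. Hence, for every level $j$ and index $k$,
$$\left(\mathcal{U}\star_w\Sigma\star_w\mathcal{V}^\top\right)_{d_j}[:,:,k] = \mathcal{U}_{d_j}[:,:,k]\,\Sigma_{d_j}[:,:,k]\,\left(\mathcal{V}_{d_j}[:,:,k]\right)^\top,$$
with the analogous identity for the smooth slices at level $L$. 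Constructing the factor tensors is therefore equivalent to selecting a matrix factorization on each individual slice.

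Next I would define $\mathcal{U},\Sigma,\mathcal{V}$ constructively. Applying the classical matrix SVD to each wavelet-domain slice, $\mathcal{A}_{s_L}[:,:,k]=U_{s_L,k}\Sigma_{s_L,k}V_{s_L,k}^\top$ and $\mathcal{A}_{d_j}[:,:,k]=U_{d_j,k}\Sigma_{d_j,k}V_{d_j,k}^\top$, I collect these matrices as the coefficient slices of $\mathcal{U},\Sigma,\mathcal{V}$ and recover the tensors through the inverse transform of Eq.~\eqref{eq:2ndgen_wavelet_inverse}. Since every slice factor $U_{\bullet,k},V_{\bullet,k}$ is column-orthonormal, the slicewise relation $\mathcal{U}^\top\star_w\mathcal{U}=\mathcal{I}$ (and likewise for $\mathcal{V}$) holds, so $\mathcal{U}$ and $\mathcal{V}$ are orthogonal in the operative wavelet-domain sense of Definition~\ref{def:othotensor}. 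The rank-$r$ approximant $\mathcal{A}^{(r)}=\mathcal{U}\star_w\Sigma\star_w\mathcal{V}^\top$ is then obtained by retaining only the $r$ largest singular values in each slice before inverting the transform.

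I would then invoke Eckart--Young slice by slice: the optimal rank-$r$ truncation of each slice incurs squared error $\sum_{i>r}\sigma_{\bullet,k,i}^2$. Summing over all detail slices at every level $j=1,\dots,L$ and the smooth slices at level $L$ produces exactly the quantity under the radical, which is the total truncation energy of $W_{[L]}(\mathcal{A})-W_{[L]}(\mathcal{A}^{(r)})$ measured slicewise in the wavelet domain. Up to this point the argument is routine, given Theorem~\ref{thrm:transposeProp}, Definition~\ref{def:othotensor}, and the linearity of $W_{[L]}$ from Lemma~\ref{lem:linearity}.

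The main obstacle is the final transfer step. Writing $\mathcal{A}-\mathcal{A}^{(r)}=W_{[L]}^{-1}\bigl(W_{[L]}(\mathcal{A})-W_{[L]}(\mathcal{A}^{(r)})\bigr)$, I must bound the \emph{spatial} Frobenius norm by the wavelet-domain truncation energy just computed. This is precisely where the Remark that the lifting transform is non-orthogonal becomes decisive: $W_{[L]}^{-1}$ is not an isometry, so the two Frobenius norms need not agree, and a naive identification of them is not justified. I would attack this by exploiting the triangular lifting structure of the inverse steps in Eq.~\eqref{eq:2ndgen_wavelet_inverse}, which act only along the third mode, to control $\|W_{[L]}^{-1}(\mathcal{E})\|_F$ in terms of the slicewise energy of $\mathcal{E}$; the crux is to show that the bottom-up reconstruction does not amplify the slicewise error beyond the stated tail sum (or, failing exact non-expansiveness, to isolate the precise reconstruction constant and the regime in which the clean inequality holds). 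Establishing this norm-compatibility across the non-orthogonal transform is the hard part of the proof, while the SVD and Eckart--Young components are standard.
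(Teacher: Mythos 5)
Your construction of $\mathcal{U}$, $\Sigma$, $\mathcal{V}$ is exactly the paper's: take the classical SVD of every wavelet-domain slice $\mathcal{A}_{s_L}[:,:,k]$ and $\mathcal{A}_{d_j}[:,:,k]$, collect the factors, and map them back through $W_{[L]}^{-1}$. On the existence part you and the paper coincide. The difference is in the error bound: the paper's proof simply stops after exhibiting the factors and never derives the stated Frobenius inequality at all, whereas you attempt it via slicewise Eckart--Young and then honestly flag the transfer from the wavelet domain back to the spatial domain as the unresolved step. That flag is exactly right, and it is worth being concrete about why. The slicewise Eckart--Young argument gives, with equality, that the wavelet-domain error energy is $\sum_{i>r}\bigl(\sum_{j,k}\sigma_{d_j,k,i}^2+\sum_k\sigma_{s_L,k,i}^2\bigr)$, i.e.\ the quantity under the radical. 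But a single level of the lazy lifting step sends a pair $(x_{\mathrm{odd}},x_{\mathrm{even}})$ along the third mode to $\bigl(\tfrac{1}{2}(x_{\mathrm{odd}}+x_{\mathrm{even}}),\,x_{\mathrm{odd}}-x_{\mathrm{even}}\bigr)$, whose matrix has singular values $\sqrt{2}$ and $1/\sqrt{2}$; hence $W_{[L]}^{-1}$ has operator norm at least $\sqrt{2}$, growing with $L$ on the repeatedly-transformed smooth channel. The inverse transform can therefore amplify the truncation error, and the inequality with constant $1$ does not follow from the slicewise bound; at best one obtains the right-hand side multiplied by $\lVert W_{[L]}^{-1}\rVert_2$. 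So the gap you isolate is genuine, it is not closed by the paper either, and as stated the inequality appears to require either an orthogonal (normalized) wavelet or an explicit reconstruction constant.

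Two smaller points. First, your claim that $\mathcal{U}^\top\star_w\mathcal{U}=\mathcal{I}$ holds "in the operative wavelet-domain sense" is fine slicewise, but note that Definition~\ref{def:othotensor} as written requires all slices to equal one fixed orthogonal matrix $Q_n$, which your per-slice factors $U_{\bullet,k}$ do not satisfy; the paper's own assertion that its $\mathcal{U},\mathcal{V}$ are orthogonal "as in Definition~\ref{def:othotensor}" has the same mismatch, so this is a defect of the paper's framework rather than of your argument. Second, your unfolding of the triple product into independent slicewise products is the correct reading of Definition~\ref{def:wproduct} together with Theorem~\ref{thrm:transposeProp} and is consistent with how the paper uses it.
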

\begin{proof}
    A third-order tensor $\mathcal{A} \in \mathbb{R}^{n_1 \times n_2 \times p}$ with $p = 2^L m$, where $m, L \in \mathbb{N}$, admits a wavelet decomposition as follows,
    $$\mathcal{A} = W_{[L]}^{-1}(\mathcal{A}_{s_L}, \{\mathcal{A}_{d_j}\}_{j=1}^L)$$
    where $\mathcal{A}_{s_L} \in \mathbb{R}^{n_1 \times n_2 \times m}$ and $\mathcal{A}_{d_j} \in \mathbb{R}^{n_1 \times n_2 \times \frac{p}{2^j}}$ for $j=1,2,\dots,L$. Now each of the face matrices follows a low rank SVD decomposition as follows,
    \begin{equation}\label{eq:svd_proof1}
        \mathcal{A}_{s_L}[:,:,k] = \mathcal{U}_{s_L}[:,:,k] \Sigma_{s_L}[:,:,k] \mathcal{V}^\top_{s_L}[:,:,k],
    \end{equation}
    for $k=1,2,\dots,m$ where $\mathcal{U}_{s_L}[:,:,k] \in \mathbb{R}^{n_1 \times r}, \Sigma_{s_L}[:,:,k] \in \mathbb{R}^{r \times r}$ and $\mathcal{V}_{s_L}[:,:,k] \in \mathbb{R}^{n_2 \times r}$ for all $k$. Also,
    \begin{equation}\label{eq:svd_proof2}
        \mathcal{A}_{d_j}[:,:,k] = \mathcal{U}_{d_j}[:,:,k] \Sigma_{d_j}[:,:,k] \mathcal{V}^\top_{d_j}[:,:,k],
    \end{equation}
    for $k=1,2,\dots, \frac{p}{2^j}$ and $j=1,2,\dots,L$ where $\mathcal{U}_{d_j}[:,:,k] \in \mathbb{R}^{n_1 \times r}, \Sigma_{d_j}[:,:,k] \in \mathbb{R}^{r \times r}$ and $\mathcal{V}_{d_j}[:,:,k] \in \mathbb{R}^{n_2 \times r}$ for all $k$. From Eq.~\eqref{eq:svd_proof1} and Eq.~\eqref{eq:svd_proof2}, applying inverse wavelet transformations, we get $\mathcal{U}, \Sigma,$ and $\mathcal{V}$ such that,
    \begin{equation*}    
        \begin{split}
            &\mathcal{U} = W_{[L]}^{-1}(\mathcal{U}_{s_L}, \{\mathcal{U}_{d_j}\}_{j=1}^L),\\
            &\Sigma = W_{[L]}^{-1}(\Sigma_{s_L}, \{\Sigma_{d_j}\}_{j=1}^L),\\
            &\mathcal{V} = W_{[L]}^{-1}(\mathcal{V}_{s_L}, \{\mathcal{V}_{d_j}\}_{j=1}^L),
        \end{split}
    \end{equation*}
    which satisfies the equation
    \begin{equation*}
        \mathcal{A}^{(r)} \approx \mathcal{U} \star_w \Sigma \star_w \mathcal{V}^\top.
    \end{equation*}
    It is important to note that here, $\mathcal{U}$ and $\mathcal{V}$ are orthogonal tensors defined in Definition~\ref{def:othotensor}. This shows that there exists a low-rank SVD decomposition satisfying the given equation.
\end{proof}

\begin{definition}[Moore-Penrose Inverse]\label{defn:pinv}
    Let $\mathcal{A}\in \mathbb{R}^{m\times n\times p}$, then the tensor $\mathcal{X}\in\mathbb{R}^{n\times m\times p}$ with $p = 2^Lm$, where, $m,L\in\mathbb{N}$, satisfying,
    \begin{equation}
        \begin{cases}
            \mathcal{A}\star_w\mathcal{X}\star_w\mathcal{A} = \mathcal{A}\\
            \mathcal{X}\star_w\mathcal{A}\star_w\mathcal{X} = \mathcal{X}\\
            \left(\mathcal{A}\star_w\mathcal{X}\right)^\top = \mathcal{A}\star_w\mathcal{X}\\
            \left(\mathcal{X}\star_w\mathcal{A}\right)^\top = \mathcal{X}\star_w\mathcal{A}
        \end{cases}
    \end{equation}
    is called the Moore-Penrose inverse of the tensor $\mathcal{A}$, and is denoted by $\mathcal{A}^\dagger$.
\end{definition}

\begin{theorem}[Existence and Uniqueness of Moore-Penrose Inverse]
    The Moore-Penrose inverse of the tensor $\mathcal{A}\in\mathbb{R}^{m\times n\times p}$ exists and is unique under the $w$-product.
\end{theorem}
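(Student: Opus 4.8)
The plan is to exploit the defining feature of the $w$-product (Definition~\ref{def:wproduct}): after the forward wavelet transform $W_{[L]}$, it reduces to independent ordinary matrix multiplications on each frontal slice (face) of the wavelet coefficients. Writing $W_{[L]}(\mathcal{A}) = (\mathcal{A}_{s_L}, \{\mathcal{A}_{d_j}\}_{j=1}^{L})$, I would index the collection of all face matrices by a pair $(c,k)$, where $c \in \{s_L, d_1, \dots, d_L\}$ labels the coefficient block and $k$ labels the slice within that block, and abbreviate $A_{c,k} := (W_{[L]}(\mathcal{A}))_c[:,:,k]$, with $X_{c,k}$ defined analogously for the candidate tensor $\mathcal{X}$. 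The first step is to show that the four tensor equations in Definition~\ref{defn:pinv} are equivalent, face by face, to the four classical Penrose equations $A X A = A$, $X A X = X$, $(AX)^\top = AX$, and $(XA)^\top = XA$ for every matrix $A_{c,k}$.

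Carrying this out uses two ingredients. First, since $W_{[L]}$ is a bijection with $W_{[L]}^{-1} W_{[L]} = \mathrm{id}$ (Lemma~\ref{lem:perfect_reconstruction}), two tensors are equal if and only if all of their wavelet faces coincide; hence each tensor identity may be checked in the wavelet domain. Second, in the wavelet domain $\star_w$ acts as the face-product $\Delta$, so a product such as $\mathcal{A}\star_w\mathcal{X}\star_w\mathcal{A}$ has faces $A_{c,k} X_{c,k} A_{c,k}$, while the transpose commutes with $W_{[L]}$ by the Transpose Invariance Property (Theorem~\ref{thrm:transposeProp}), so a symmetry condition like $(\mathcal{A}\star_w\mathcal{X})^\top = \mathcal{A}\star_w\mathcal{X}$ becomes $(A_{c,k}X_{c,k})^\top = A_{c,k}X_{c,k}$ face by face. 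Thus the full tensor system decouples into independent copies of the classical four-condition Penrose system, one per face.

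For existence, I would define $\mathcal{X}$ by setting each wavelet face $X_{c,k} := A_{c,k}^{\dagger}$, the ordinary matrix Moore-Penrose pseudoinverse of $A_{c,k}$ (which always exists), and then applying $W_{[L]}^{-1}$ to assemble $\mathcal{X} \in \mathbb{R}^{n \times m \times p}$. By the face-wise equivalence just established, this $\mathcal{X}$ satisfies all four tensor conditions, so $\mathcal{A}^\dagger$ exists. For uniqueness, suppose $\mathcal{X}_1$ and $\mathcal{X}_2$ both satisfy Definition~\ref{defn:pinv}. Transforming to the wavelet domain, the faces $(X_1)_{c,k}$ and $(X_2)_{c,k}$ each satisfy the four classical Penrose conditions for the same $A_{c,k}$; by uniqueness of the classical matrix pseudoinverse they agree for every $(c,k)$, and invertibility of $W_{[L]}$ then forces $\mathcal{X}_1 = \mathcal{X}_2$.

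The main obstacle is the rigorous justification of the decoupling in the first step, and in particular the handling of the two transpose conditions: one must verify that the transpose defined slice-wise along the third mode (Definition~\ref{def:transpose}) is exactly the operation that becomes entrywise matrix transposition of each wavelet face, which is precisely the content of Theorem~\ref{thrm:transposeProp}. Once that commutation and the invertibility of $W_{[L]}$ are in place, the remaining steps are immediate consequences of the standard existence and uniqueness theory for the matrix Moore-Penrose inverse, applied independently to each of the finitely many faces.
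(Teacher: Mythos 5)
Your proposal is correct, but it takes a genuinely different route from the paper. You reduce the whole problem to the classical matrix case: using perfect reconstruction (Lemma~\ref{lem:perfect_reconstruction}) to identify tensors with their wavelet faces, the face-product structure of $\star_w$, and the commutation of transposition with $W_{[L]}$ (Theorem~\ref{thrm:transposeProp}), you decouple the four tensor Penrose equations into independent copies of the four matrix Penrose equations, one per face, and then invoke existence and uniqueness of the ordinary matrix pseudoinverse. The paper instead argues at the tensor level throughout: existence is obtained by writing $\mathcal{A}^\dagger = \mathcal{V}\star_w\Sigma^\dagger\star_w\mathcal{U}^\top$ from the $w$-SVD (Theorem~\ref{thrm:svd}), and uniqueness is the classical Penrose chain of equalities $\mathcal{X} = \mathcal{X}\star_w\mathcal{A}\star_w\mathcal{X} = \cdots = \mathcal{Y}$ carried out with the $w$-product, using only associativity and the transpose reversal property (Theorem~\ref{thrm:transpose_reversal}). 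Each approach has its merits: the paper's uniqueness argument is transform-agnostic and would work verbatim for any associative product with a compatible transpose, while your face-wise reduction is more elementary, yields the explicit construction of $\mathcal{A}^\dagger$ directly, and arguably gives a cleaner existence proof, since the paper's Theorem~\ref{thrm:svd} is stated only as a low-rank approximation and must implicitly be read at full rank for the exact identity the existence argument requires. Your stated concern about rigorously justifying the decoupling of the transpose conditions is exactly resolved by Theorem~\ref{thrm:transposeProp} together with the injectivity of $W_{[L]}$, so no gap remains.
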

\begin{proof}
    From Definition~\ref{defn:pinv}, Theorem~\ref{thrm:inverse_reversal} and Theorem~\ref{thrm:svd}, it directly follows that
    \begin{equation}
        \mathcal{A}^\dagger = \mathcal{V}\star_w\Sigma^\dagger\star_w\mathcal{U}^\top
    \end{equation}
    where, $\mathcal{U}$ and $\mathcal{V}$ are orthogonal tensors. Hence, the Moore-Penrose inverse of $\mathcal{A}$ exists. 
    
    Now, for uniqueness, assume there exist two distinct Moore-Penrose inverses of $\mathcal{A}$ given by $\mathcal{X},\mathcal{Y}\in\mathbb{R}^{n\times m\times p}$. Subsequently, according to Definition~\ref{defn:pinv} we get,
    \begin{equation*}
        \begin{split}
            \mathcal{X} &= \mathcal{X}\star_w\mathcal{A}\star_w\mathcal{X} = \mathcal{X}\star_w\left(\mathcal{A}\star_w\mathcal{X}\right)^\top = \mathcal{X}\star_w\mathcal{X}^\top\star_w\mathcal{A}^\top\\
            &= \mathcal{X}\star_w\mathcal{X}^\top\star_w\mathcal{A}^\top\star_w\mathcal{Y}^\top\star_w\mathcal{A}^\top= \mathcal{X}\star_w\mathcal{Y}^\top\star_w\mathcal{A}^\top\\
            &= \mathcal{X}\star_w \mathcal{A}\star_w\mathcal{Y} = \mathcal{X}\star_w \mathcal{A}\star_w\mathcal{Y}\star_w \mathcal{A}\star_w\mathcal{Y}\\
            &= \left(\mathcal{X}\star_w\mathcal{A}\right)^\top\star_w\left(\mathcal{Y}\star_w\mathcal{A}\right)^\top\star_w\mathcal{Y}\\
            &= \mathcal{A}^\top\star_w\mathcal{X}^\top\star_w\mathcal{A}^\top\star_w\mathcal{Y}^\top\star_w\mathcal{Y} = \mathcal{A}^\top\star_w\mathcal{Y}^\top\star_w\mathcal{Y}\\
            &= \mathcal{Y}\star_w\mathcal{A}\star_w\mathcal{Y} = \mathcal{Y}.
        \end{split}
    \end{equation*}
    which proves the uniqueness of the Moore-Penrose inverse.
\end{proof}

\begin{remark}\label{rem:neqInverseRev}
   In general, the inverse reversal property~\ref{thrm:inverse_reversal} does not hold for arbitrary tensors $\mathcal{A} \in \mathbb{R}^{n_1 \times n_2 \times p}$ and $\mathcal{B} \in \mathbb{R}^{n_2 \times n_3 \times p}$. Simply, in general, 
    \begin{equation}\label{eq:neqInverseProperty}
        \left(\mathcal{A} \star_w \mathcal{B}\right)^{\dagger} \neq \mathcal{B}^{\dagger} \star_w \mathcal{A}^{\dagger}.
    \end{equation}   
\end{remark}
To justify this remark, let us take an example of tensors $\mathcal{A}\in \mathbb{R}^{2\times 3\times 2}$ and $\mathcal{B}\in \mathbb{R}^{3\times 2\times 2}$ such that,
{\small
$$\mathcal{A}^{(1)} = \begin{pmatrix}
0 & 1 & 1 \\
0 & 0 & 1
\end{pmatrix},~~ \mathcal{A}^{(2)} = \begin{pmatrix}
1 & 0 & 1 \\
0 & 1 & 1
\end{pmatrix},$$ 
$$\mathcal{B}^{(1)} = \begin{pmatrix}
0 & 0 \\
0 & 0 \\
1 & 1
\end{pmatrix},~~ \mathcal{B}^{(2)} = \begin{pmatrix}
0 & 0 \\
0 & 1 \\
1 & 0
\end{pmatrix},$$}

{\small
$$\mathcal{A}_{s} = \begin{pmatrix}
0.5 & 0.5 & 1.0 \\
0.0 & 0.5 & 1.0
\end{pmatrix},~~ \mathcal{A}_{d} = \begin{pmatrix}
-1 & 1 & 0 \\
0 & -1 & 0
\end{pmatrix},$$
$$\mathcal{B}_{s} = \begin{pmatrix}
0.0 & 0.0 \\
0.0 & 0.5 \\
1.0 & 0.5
\end{pmatrix},~~ \mathcal{B}_{d} = \begin{pmatrix}
0 & 0 \\
0 & -1 \\
0 & 1
\end{pmatrix},$$}

Now, computing left hand side of Eq.~\eqref{eq:neqInverseProperty}, which is $\left(\mathcal{A} \star_w \mathcal{B}\right)^{\dagger}$, assuming $\mathcal{C} = \left(\mathcal{A} \star_w \mathcal{B}\right)^{\dagger}$,

{\small
$$\left(\mathcal{A} \star_w \mathcal{B}\right)_{s} = \begin{pmatrix}
1.00 & 0.75 \\
1.00 & 0.75
\end{pmatrix},~~ \left(\mathcal{A} \star_w \mathcal{B}\right)_{d} = \begin{pmatrix}
0 & -1 \\
0 & 1
\end{pmatrix},$$
$$\mathcal{C}_{s} = \begin{pmatrix}
0.32 & 0.32 \\
0.24 & 0.24
\end{pmatrix},~~ \mathcal{C}_{d} = \begin{pmatrix}
0.0 & 0.0 \\
-0.5 & 0.5
\end{pmatrix},$$
$$\mathcal{C}^{(1)} = \begin{pmatrix}
0.32 & 0.32 \\
-0.01 & 0.49
\end{pmatrix},~~ \mathcal{C}^{(2)} = \begin{pmatrix}
0.32 & 0.32 \\
0.49 & -0.01
\end{pmatrix}.$$}

Now, computing right hand side of Eq.~\eqref{eq:neqInverseProperty}, which is $\mathcal{B}^{\dagger} \star_w \mathcal{A}^{\dagger}$, assuming $\mathcal{D} = \mathcal{B}^\dagger \star_w \mathcal{A}^\dagger$,
{\small
$$\mathcal{A}^\dagger_{s} = \begin{pmatrix}
2.0 & -2.0 \\ 
0.0 & 0.4 \\
0.0 & 0.8
\end{pmatrix},~~ \mathcal{A}^\dagger_{d} = \begin{pmatrix}
-1 & -1\\ 
0 & -1\\
0 & 0
\end{pmatrix},$$
$$\mathcal{B}^\dagger_{s} = \begin{pmatrix}
0 & -1 & 1\\
0 & 2 & 0
\end{pmatrix},~~ \mathcal{B}^\dagger_{d} = \begin{pmatrix}
0.0 & 0.0 & 0.0 \\
0.0 & -0.5 & 0.5
\end{pmatrix},$$
$$\mathcal{D}_{s} = \begin{pmatrix}
0.0 & 0.4 \\
0.0 & 0.8
\end{pmatrix},~~ \mathcal{D}_{d} = \begin{pmatrix}
0.0 & 0.0 \\
0.0 & 0.5
\end{pmatrix},$$
$$\mathcal{D}^{(1)} = \begin{pmatrix}
0.00 & 0.40 \\
0.00 & 1.05
\end{pmatrix},~~ \mathcal{D}^{(2)} = \begin{pmatrix}
0.00 & 0.40 \\
0.00 & 0.55
\end{pmatrix},$$}
As it is clear from above that $\mathcal{C}\neq\mathcal{D}$ which implies, $\left(\mathcal{A} \star_w \mathcal{B}\right)^{\dagger} \neq \mathcal{B}^{\dagger} \star_w \mathcal{A}^{\dagger}$. Hence, is the Remark~\ref{rem:neqInverseRev}.

\begin{lemma}[Properties of Moore-Penrose Inverse]
    Some properties of the Moore-Penrose inverse of tensor $\mathcal{A}\in\mathbb{R}^{m\times n\times p}$ are,
    \begin{enumerate}
        \item $(\mathcal{A}^\dagger)^\dagger = \mathcal{A}$
        \item $(\mathcal{A}^\top)^\dagger = (\mathcal{A}^\dagger)^\top$
        \item $(\mathcal{A}\star_w\mathcal{A}^\top)^\dagger = (\mathcal{A}^\top)^\dagger\star_w\mathcal{A}^\dagger$
        \item $\mathcal{A}^\dagger = \mathcal{A}^\top\star_w(\mathcal{A}\star_w\mathcal{A}^\top)^\dagger$
    \end{enumerate}
\end{lemma}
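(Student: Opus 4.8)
The plan is to reduce every identity to a \emph{face-wise} statement in the wavelet domain and then invoke the uniqueness of the Moore-Penrose inverse already established. The governing observation is threefold: $\star_w$ acts as independent matrix multiplication on each face $[:,:,k]$ of every wavelet band $s_L$ and $d_j$ (Definition~\ref{def:wproduct}); the transpose commutes with $W_{[L]}$, so $(\mathcal{A}^\top)_{d_j}[:,:,k] = (\mathcal{A}_{d_j}[:,:,k])^\top$ and likewise on $s_L$ (Theorem~\ref{thrm:transposeProp}); and, by the construction $\mathcal{A}^\dagger = \mathcal{V}\star_w\Sigma^\dagger\star_w\mathcal{U}^\top$ used in the existence and uniqueness theorem, the faces of $\mathcal{A}^\dagger$ are exactly the classical matrix Moore-Penrose inverses of the corresponding faces of $\mathcal{A}$. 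Consequently, any identity that holds face-wise lifts to the tensor level by applying $W_{[L]}^{-1}$, using linearity (Lemma~\ref{lem:linearity}) and perfect reconstruction (Lemma~\ref{lem:perfect_reconstruction}).

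For property (1) I would verify directly that $\mathcal{A}$ satisfies the four defining equations of Definition~\ref{defn:pinv} with $\mathcal{A}^\dagger$ placed in the role of the operand: the system for the pair $(\mathcal{A}^\dagger,\mathcal{A})$ is obtained from that for $(\mathcal{A},\mathcal{A}^\dagger)$ merely by interchanging the first two equations and the last two, so $\mathcal{A}$ is a Moore-Penrose inverse of $\mathcal{A}^\dagger$ and uniqueness forces $(\mathcal{A}^\dagger)^\dagger=\mathcal{A}$. For property (2) I would set $\mathcal{X}=(\mathcal{A}^\dagger)^\top$ and check the four equations for $\mathcal{A}^\top$, repeatedly using transpose reversal (Theorem~\ref{thrm:transpose_reversal}) to move transposes across products; for instance $\mathcal{A}^\top\star_w(\mathcal{A}^\dagger)^\top\star_w\mathcal{A}^\top=(\mathcal{A}\star_w\mathcal{A}^\dagger\star_w\mathcal{A})^\top=\mathcal{A}^\top$, while the two symmetry conditions follow because $\mathcal{A}^\dagger\star_w\mathcal{A}$ and $\mathcal{A}\star_w\mathcal{A}^\dagger$ are already symmetric. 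Uniqueness then gives $(\mathcal{A}^\top)^\dagger=(\mathcal{A}^\dagger)^\top$.

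Properties (3) and (4) are the substantive ones, and here the main obstacle is conceptual rather than computational: Remark~\ref{rem:neqInverseRev} shows that the reverse-order law $(\mathcal{A}\star_w\mathcal{B})^\dagger=\mathcal{B}^\dagger\star_w\mathcal{A}^\dagger$ fails for generic $\mathcal{B}$, so I must explain why the special choice $\mathcal{B}=\mathcal{A}^\top$ is exempt. The resolution is again the face-wise reduction: by transpose invariance (Theorem~\ref{thrm:transposeProp}) each face of $\mathcal{A}\star_w\mathcal{A}^\top$ equals $MM^\top$ with $M=\mathcal{A}_{d_j}[:,:,k]$ (and analogously on the $s_L$ band), and for a single matrix the classical identities $(MM^\top)^\dagger=(M^\top)^\dagger M^\dagger=(M^\dagger)^\top M^\dagger$ and $M^\dagger=M^\top(MM^\top)^\dagger$ always hold. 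These are exactly the SVD/Greville facts for the symmetric factorization $MM^\top$ and do not require the general reverse-order hypotheses that fail in Remark~\ref{rem:neqInverseRev}. Applying $W_{[L]}^{-1}$ band-by-band then yields property (3); property (4) follows either by the same face-wise identity or by left-multiplying the face-wise form of (3) by $\mathcal{A}^\top$ and reassembling.

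The delicate points I would watch are ensuring that $\dagger$ on a tensor is consistently interpreted as the face-wise matrix pseudoinverse (so that the construction from the existence theorem is used throughout) and handling the rectangular case of $M$ correctly when writing $(MM^\top)^\dagger=(M^\dagger)^\top M^\dagger$. I expect property (3) to be the crux, since it is the only place where a reverse-order law is asserted in apparent tension with Remark~\ref{rem:neqInverseRev}; once the face-wise $MM^\top$ structure is exposed, the remaining steps are routine transpositions followed by an application of perfect reconstruction.
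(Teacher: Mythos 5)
Your proposal is correct, but it is worth noting that the paper's own proof of this lemma is a single sentence asserting that the properties ``directly follow from the Definitions, Lemmas, and Theorems defined earlier,'' so you have supplied an argument where the paper supplies essentially none. Your route is the natural one and it is sound: properties (1) and (2) follow from the symmetry of the four Penrose equations under interchanging $\mathcal{A}$ and $\mathcal{X}$, from Theorem~\ref{thrm:transpose_reversal}, and from the uniqueness theorem; properties (3) and (4) reduce, face by face in the wavelet domain, to the classical matrix identities $(MM^\top)^\dagger=(M^\dagger)^\top M^\dagger$ and $M^\dagger=M^\top(MM^\top)^\dagger$, which follow from the SVD of a single face $M=U\Sigma V^\top$ and lift to the tensor level by $W_{[L]}^{-1}$. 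The one point where your write-up adds genuine value beyond what the paper even gestures at is the reconciliation with Remark~\ref{rem:neqInverseRev}: the general reverse-order law fails under $\star_w$, so property (3) cannot be cited as a special case of a reverse-order law; it must instead be derived from the symmetric factorization $MM^\top$ on each face, exactly as you do. Your implicit characterization that the wavelet-domain faces of $\mathcal{A}^\dagger$ are precisely the matrix Moore--Penrose inverses of the corresponding faces of $\mathcal{A}$ is justified, since the four tensor Penrose equations decouple face-wise (products and transposes both act face-wise by Definition~\ref{def:wproduct} and Theorem~\ref{thrm:transposeProp}) and uniqueness holds at both the matrix and tensor levels. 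The only caution is the one you already flag: keep the rectangular shapes of $\Sigma$ and $\Sigma^\dagger$ straight when verifying $\Sigma(\Sigma^2)^\dagger=\Sigma^\dagger$ on each face.
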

\begin{proof}
    These properties directly follow from the Definitions, Lemmas, and Theorems defined earlier in this Section.
\end{proof}

\begin{definition}[Trace]\label{def:trace}
    The trace of the tensor $\mathcal{A}\in \mathbb{R}^{n\times n\times p}$ is defined as,
    \begin{equation}
        \mathrm{Tr}(\mathcal{A}) = \sum_{k=1}^p\sum_{i=1}^n\mathcal{A}[i,i,k]
    \end{equation}
\end{definition}

\begin{lemma}[Properties of Trace]
    Some of the properties of the trace of a tensors $\mathcal{A},\mathcal{B}\in\mathbb{R}^{n\times n\times p}$ under $w$-product are,
    \begin{enumerate}
        \item $\mathrm{Tr}(\mathcal{A}) = \mathrm{Tr}(\mathcal{A}^\top)$.
        \item $\mathrm{Tr}(\alpha\mathcal{A} + \beta\mathcal{B}) = \alpha\mathrm{Tr}(\mathcal{A}) + \beta\mathrm{Tr}(\mathcal{B})$ where, $\alpha,\beta\in\mathbb{R}$.
        \item $\mathrm{Tr}(\mathcal{A}\star_w\mathcal{B}) = \mathrm{Tr}(\mathcal{B}\star_w\mathcal{A})$.
        \item $\mathrm{Tr}(\mathcal{A}^\top\star_w\mathcal{B}) = \mathrm{Tr}(\mathcal{A}\star_w\mathcal{B}^\top)$.
        \item $\mathrm{Tr}(\mathcal{A}^\dagger\star_w\mathcal{B}\star_w\mathcal{A}) = \mathrm{Tr}(\mathcal{B})$ if either $\mathcal{A}\star_w\mathcal{A}^\dagger\star_w\mathcal{B} = \mathcal{B}$ or, $\mathcal{B}\star_w\mathcal{A}\star_w\mathcal{A}^\dagger = \mathcal{B}$.
    \end{enumerate}
\end{lemma}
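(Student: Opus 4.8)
The plan is to first reduce the trace, which is defined purely in the spatial domain (Definition~\ref{def:trace}), to a quantity that lives on the coarsest wavelet slices, and then to push each of the five properties through the face-wise matrix products that define $\star_w$. The central fact I would establish at the outset is that, under the lazy wavelet transform, the diagonal sum along the third mode collapses onto the smooth slices $\mathcal{A}_{s_L}$. Concretely, from the reconstruction step of Algorithm~\ref{alg:w_product} the even and odd entries satisfy $\text{even} = s_j - d_j/2$ and $\text{odd} = s_j + d_j/2$, so each adjacent pair sums to $2\,s_j$ and the detail term cancels. Iterating this cancellation over the $L$ levels yields
\begin{equation*}
    \sum_{k=1}^{p}\mathcal{A}[i,i,k] = 2^{L}\sum_{k=1}^{m}\mathcal{A}_{s_L}[i,i,k],
\end{equation*}
and hence $\mathrm{Tr}(\mathcal{A}) = 2^{L}\sum_{k=1}^{m}\mathrm{tr}\!\left(\mathcal{A}_{s_L}[:,:,k]\right)$, where $\mathrm{tr}(\cdot)$ denotes the ordinary matrix trace. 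This identity is the workhorse for the remaining parts, because $\star_w$ acts face-wise precisely on these smooth slices.

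Properties~1 and~2 I would dispatch directly. For transpose invariance, Definition~\ref{def:transpose} gives $\mathcal{A}^\top[i,i,k] = \mathcal{A}[i,i,k]$ on the diagonal, so the diagonal sum is unchanged; linearity is immediate since the trace is a finite linear combination of tensor entries. For Property~3, I would apply the reduction identity to $\mathcal{C} = \mathcal{A}\star_w\mathcal{B}$, use that $\mathcal{C}_{s_L}[:,:,k] = \mathcal{A}_{s_L}[:,:,k]\,\mathcal{B}_{s_L}[:,:,k]$ by Definition~\ref{def:wproduct}, and then invoke the cyclic identity $\mathrm{tr}(XY) = \mathrm{tr}(YX)$ slice by slice; the common prefactor $2^L$ cancels on both sides. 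Property~4 follows the same route: combining the transpose invariance of the wavelet coefficients (Theorem~\ref{thrm:transposeProp}), which gives $(\mathcal{A}^\top)_{s_L}[:,:,k] = (\mathcal{A}_{s_L}[:,:,k])^\top$, with the elementary matrix identity $\mathrm{tr}(X^\top Y) = \mathrm{tr}(X Y^\top)$.

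Finally, Property~5 I would obtain without returning to the slice level, treating the trace as a cyclic, associative functional. Starting from $\mathrm{Tr}(\mathcal{A}^\dagger\star_w\mathcal{B}\star_w\mathcal{A})$ and regrouping via Theorem~\ref{thrm:associativity}, the cyclic identity of Property~3 moves the trailing factor $\mathcal{A}$ to the front, giving $\mathrm{Tr}(\mathcal{A}\star_w\mathcal{A}^\dagger\star_w\mathcal{B})$, which equals $\mathrm{Tr}(\mathcal{B})$ under the hypothesis $\mathcal{A}\star_w\mathcal{A}^\dagger\star_w\mathcal{B} = \mathcal{B}$; the symmetric regrouping handles the alternative hypothesis $\mathcal{B}\star_w\mathcal{A}\star_w\mathcal{A}^\dagger = \mathcal{B}$. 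I expect the main obstacle to be the very first step: showing rigorously that the detail coefficients contribute nothing to the diagonal sum and correctly tracking the $2^L$ factor through the $L$-level recursion. Once that reduction is secured, each remaining property is a short consequence of the face-wise structure of $\star_w$ and the standard algebra of the matrix trace.
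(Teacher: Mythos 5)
Your proposal is correct, but for Property~3 it takes a genuinely different route from the paper. The paper keeps \emph{all} wavelet faces in play: it expands $\mathrm{Tr}(\mathcal{A}\star_w\mathcal{B})$ as a linear combination of the face traces $\mathrm{tr}\bigl(\mathcal{A}_{s_L}[:,:,k]\,\mathcal{B}_{s_L}[:,:,k]\bigr)$ \emph{and} $\mathrm{tr}\bigl(\mathcal{A}_{d_j}[:,:,k]\,\mathcal{B}_{d_j}[:,:,k]\bigr)$ over every level $j$ and slice $k$, applies $\mathrm{tr}(XY)=\mathrm{tr}(YX)$ to each face separately, and then appeals to linearity of the inverse transform. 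You instead first prove the reduction identity $\mathrm{Tr}(\mathcal{A}) = 2^{L}\sum_{k}\mathrm{tr}\bigl(\mathcal{A}_{s_L}[:,:,k]\bigr)$, i.e.\ that the detail faces contribute nothing to the trace; this is correct for the lazy wavelet (from the reconstruction step, $\text{even}+\text{odd}=2s_j$, so the details cancel pairwise and the factor $2$ accumulates over $L$ levels), and it collapses Property~3 to a single cyclic-trace argument on the $m$ coarsest slices. Your version is sharper and makes rigorous what the paper leaves notationally loose (the paper formally applies $W_{[L]}^{-1}$ to a scalar), but it is tied to the specific choice $P_j=I$, $U_j=\tfrac{1}{2}I$: for general predict/update operators the detail weights need not vanish, whereas the paper's face-by-face argument survives any linear invertible transform along the third mode. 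Properties~1, 2, 4, and~5 are handled in essentially the same spirit as the paper (which merely declares them direct consequences); your explicit use of $\mathrm{tr}(X^\top Y)=\mathrm{tr}(XY^\top)$ together with Theorem~\ref{thrm:transposeProp} for Property~4, and of associativity plus the cyclic identity for Property~5, fills in exactly the steps the paper omits.
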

\begin{proof}
    Properties $1$ and $2$ follows directly from the definition of the trace and transpose of the tensors. 
    
    For property $3$, assume tensors $\mathcal{A},\mathcal{B}\in\mathbb{R}^{n\times n\times p}$ where $p = 2^L m$ and $m, L \in \mathbb{N}$. From Definition of $w$-product, $\mathcal{A}$ and $\mathcal{B}$ can be written as,
    $$W_{[L]}(\mathcal{A}) = \left(\mathcal{A}_{s_{L}}, \mathcal{A}_{d_{L}}, \mathcal{A}_{d_{(L-1)}}, \dots, \mathcal{A}_{d_{1}}\right)$$
    $$W_{[L]}(\mathcal{B}) = \left(\mathcal{B}_{s_{L}}, \mathcal{B}_{d_{L}}, \mathcal{B}_{d_{(L-1)}}, \dots, \mathcal{B}_{d_{1}}\right)$$
    Hence, from linearity of inverse wavelet transformation given in Lemma~\ref{lem:linearity} and known fact that Trace is linear operator,
    \begin{equation*}
        \begin{split}
            \mathrm{Tr}(\mathcal{A}\star_w\mathcal{B}) &= W_{[L]}^{-1}\left(\mathrm{Tr}\left(W_{[L]}(\mathcal{A})\Delta W_{[L]}(\mathcal{B})\right)\right)\\
            &= W_{[L]}^{-1}\left( \sum_{i=1}^m \mathrm{Tr}\left(\mathcal{A}_{s_L}[:,:,i]\mathcal{B}_{s_L}[:,:,i]\right)\right. +\\ &\qquad\qquad\left.\sum_{j=1}^L\sum_{k=1}^{p/2^j}\mathrm{Tr}\left(\mathcal{A}_{d_j}[:,:,k]\mathcal{B}_{d_j}[:,:,k]\right) \right)\\
            &=W_{[L]}^{-1}\left(\mathrm{Tr}\left(W_{[L]}(\mathcal{B})\Delta W_{[L]}(\mathcal{A})\right)\right)\\
            &= \mathrm{Tr}(\mathcal{B}\star_w\mathcal{A})
        \end{split}
    \end{equation*}
    Hence, property $3$ is proved. 
    
    Further, properties $4$ and $5$ are the direct consequences of the properties $1$-$3$ and Theorems defined earlier in this section.
\end{proof}


Singular Value Decomposition (SVD) described in Theorem~\ref{thrm:svd} for a tensor $\mathcal{A} \in \mathbb{R}^{n_1 \times n_2 \times p}$, where $p = 2^L m$ and $m, L \in \mathbb{N}$, can be efficiently computed using Algorithm~\ref{alg:svd_w_product}.

\begin{algorithm}
\small
\caption{SVD using $w$-product for tensors}
\label{alg:svd_w_product}
\begin{algorithmic}[1]
\REQUIRE Tensor $\mathcal{A} \in \mathbb{R}^{n_1 \times n_2 \times p}$ with $p=2^L m$ where $m,L \in \mathbb{N}$ 

\STATE Initialize: $\mathcal{A}_{s_{0}} = \mathcal{A}$
\FOR{$j=1$ {\textbf{to}} $L$}
    \STATE Set: $\ell = p/\left(2^{j-1}\right)$
    \STATE Detail coefficients: $\mathcal{A}_{d_{j}} = \mathcal{A}_{s_{j-1}}[:,:,1:2:(\ell-1)] - \mathcal{A}_{s_{j-1}}[:,:,2:2:\ell]$
    \STATE Smooth coefficients: $\mathcal{A}_{s_{j}} = \mathcal{A}_{s_{j-1}}[:,:,2:2:\ell] + \mathcal{A}_{d_{j}} / 2$
\ENDFOR

\FOR{$j=1$ {\textbf{to}} $L$}
    \FOR{$k=1$ {\textbf{to}} $p/2^{j}$}
        \STATE $U_{d_{j}}, \Sigma_{d_{j}}, V_{d_{j}} \gets \mathrm{svd}\left(\mathcal{A}_{d_j}[:,:,k]\right)$
        \STATE $\mathcal{C}_{d_{j}}[:,:,k] \gets U_{d_{j}}[:,:r]\Sigma_{d_{j}}[:r,:r] \left(V_{d_{j}}[:,:r]\right)^\top$
        \IF{$j$ {\textbf{is}} $L$}
            \STATE $U_{s_L}, \Sigma_{s_L}, V_{s_L} \gets \mathrm{svd}\left(\mathcal{A}_{s_L}[:,:,k]\right)$
            \STATE $\mathcal{C}_{s_{L}}[:,:,k] \gets U_{s_L}[:,:r]\Sigma_{s_L}[:r,:r] \left(V_{s_L}[:,:r]\right)^\top$
        \ENDIF
    \ENDFOR
\ENDFOR

\STATE Initialize: $\mathcal{C} \in \mathbb{R}^{n_1\times n_3 \times p}$
\FOR{$j = L$ {\textbf{downto}} $1$}
    \STATE Set: $\ell = p/\left(2^{j-1}\right)$
    \STATE $\mathcal{C}_{s_{j-1}}[:,:,2:2:\ell] = \mathcal{C}_{s_j} - \mathcal{C}_{d_j} / 2$
    \STATE $\mathcal{C}_{s_{j-1}}[:,:,1:2:(\ell-1)] = \mathcal{C}_{d_j} + \mathcal{C}_{s_{j-1}}[:,:,2:2:\ell]$
\ENDFOR
\STATE Finalize: $\mathcal{C} = \mathcal{C}_{s_0}$

\RETURN $\mathcal{C}$
\end{algorithmic}
\end{algorithm}

\section{Complexity Analysis and Discussions}\label{sec:complexity}
For the multiplication of tensors $\mathcal{A} \in \mathbb{R}^{n_1 \times n_2 \times p}$ and $\mathcal{B} \in \mathbb{R}^{n_2 \times n_3 \times p}$ using the $t$-product~\cite{kilmer2011}, denoted $\mathcal{A} \star_t \mathcal{B}$, and the $m$-product~\cite{kernfeld2015}, denoted $\mathcal{A} \star_m \mathcal{B}$, the computational complexities vary significantly for large tensors. The complexity of the $m$-product is, 
$\mathcal{O}(n_1 n_2 n_3 p)$ $+$ $\mathcal{O}\left((n_1 n_2 + n_2 n_3 + n_1 n_3) p^2\right),$
whereas the complexity of the $t$-product is,
$\mathcal{O}(n_1 n_2 n_3 p)$ $+$ $\mathcal{O}\left((n_1 n_2 + n_2 n_3 + n_1 n_3) p \log p\right).$
The proposed $w$-product, aims to reduce the complexity incurred during FFT transformation by incorporating wavelet transformation, which has a complexity order $\mathcal{O}(2n)$ compared to FFT’s order $\mathcal{O}(n \log n)$. For the worst case, choosing $\log_2 p \in \mathbb{N}$, the complexity of the $w$-product is given by,
$\mathcal{O}(n_1 n_2 n_3 p)$ $+$ $\mathcal{O}\left(2p (n_1 n_2 + n_2 n_3 + n_1 n_3)\right).$
This results in a significant reduction in the second term compared to FFT-based products, especially for large $p$, due to the linear complexity growth in wavelet transforms.

For the algorithmic comparison, choosing $n_1 = n_2 = n_3 = p$, the complexities simplify as follows,
\begin{enumerate}
    \item The complexity of $\mathcal{A} \star_m \mathcal{B}$ is $\mathcal{O}(p^4 + 3 p^4)$.
    \item The complexity of $\mathcal{A} \star_t \mathcal{B}$ is $\mathcal{O}(p^4 + 3 p^3 \log p)$.
    \item The complexity of $\mathcal{A} \star_w \mathcal{B}$ is $\mathcal{O}(p^4 + 6 p^3)$.
\end{enumerate}
A comparison of the analytical complexities of $m$-product, $t$-product, and $w$-product is illustrated in Fig.~\ref{fig:complexity}.

The time complexity analysis was conducted using an Intel Core $i9-12900$K CPU ($3.20$GHz base, up to $5.20$GHz Turbo), $32$GB RAM, and $16$ cores. Experiments were performed using Python $3.11.0$. 

For experimental time analysis, we assume tensors $\mathcal{A}, \mathcal{B}\in\mathbb{R}^{p\times p\times p}$ which are created using \texttt{torch.rand}. All the experimental results in Table~\ref{tab:computation_times} were averaged over $10$ runs. Fig.~\ref{fig:time} shows these results, which follow the same trend as the expected analytical results shown in Fig.~\ref{fig:complexity}.

\begin{table}
\centering
\caption{Average computation times over $10$ runs for different product methods over varying $p$ for multiplying $\mathcal{A}\in\mathbb{R}^{p\times p\times p}$ and $\mathcal{B}\in\mathbb{R}^{p\times p\times p}$.}
\label{tab:computation_times}
\resizebox{\linewidth}{!}{
\begin{tabular}{|c|c|c|c|}
\hline
$p$ & $m$-product (sec) & $t$-product (sec) & $w$-product (sec) \\ \hline
$2^1$ & 9.466076e-04 & 2.240753e-04 & 1.576734e-04 \\ \hline
$2^2$ & 9.183884e-04 & 3.964186e-04 & 1.577830e-04 \\ \hline
$2^3$ & 2.738118e-03 & 4.737735e-04 & 3.587484e-04 \\ \hline
$2^4$ & 9.937849e-03 & 6.299186e-04 & 4.789996e-04 \\ \hline
$2^5$ & 8.881669e-02 & 2.113118e-03 & 1.763617e-03 \\ \hline
$2^6$ & 1.641816e-01 & 1.014814e-02 & 8.662295e-03 \\ \hline
$2^7$ & 4.362548e+00 & 7.131300e-02 & 5.306451e-02 \\ \hline
$2^8$ & 2.615309e+01 & 5.213821e-01 & 3.919755e-01 \\ \hline
$2^9$ & 1.123852e+02 & 4.842443e+00 & 3.161437e+00 \\ \hline
$2^{10}$ & 6.378374e+02 & 1.574944e+02 & 7.749125e+01 \\ \hline
\end{tabular}}
\end{table}

\begin{figure}
    \centering
    \includegraphics[width=0.75\linewidth]{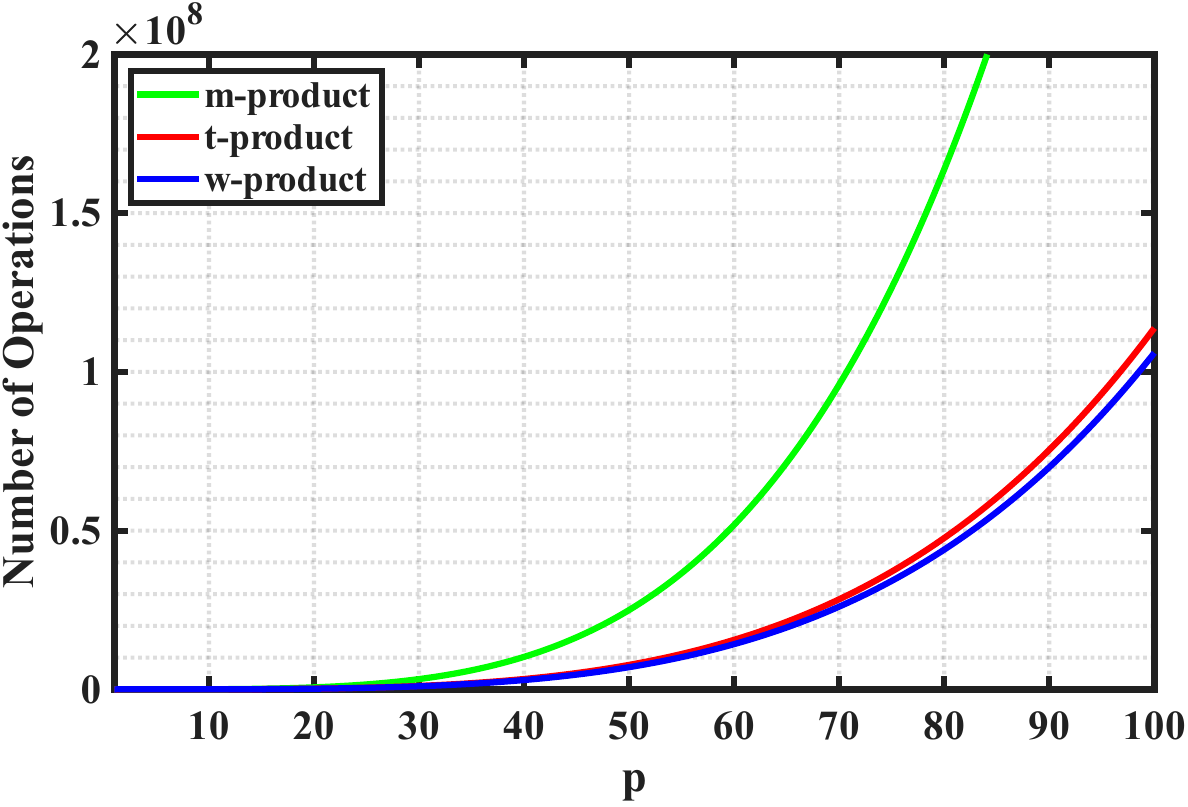}
    \caption{Analytical complexity (Number of operations).}
    \label{fig:complexity}
\end{figure}

\begin{figure}
    \centering
    \includegraphics[width=0.75\linewidth]{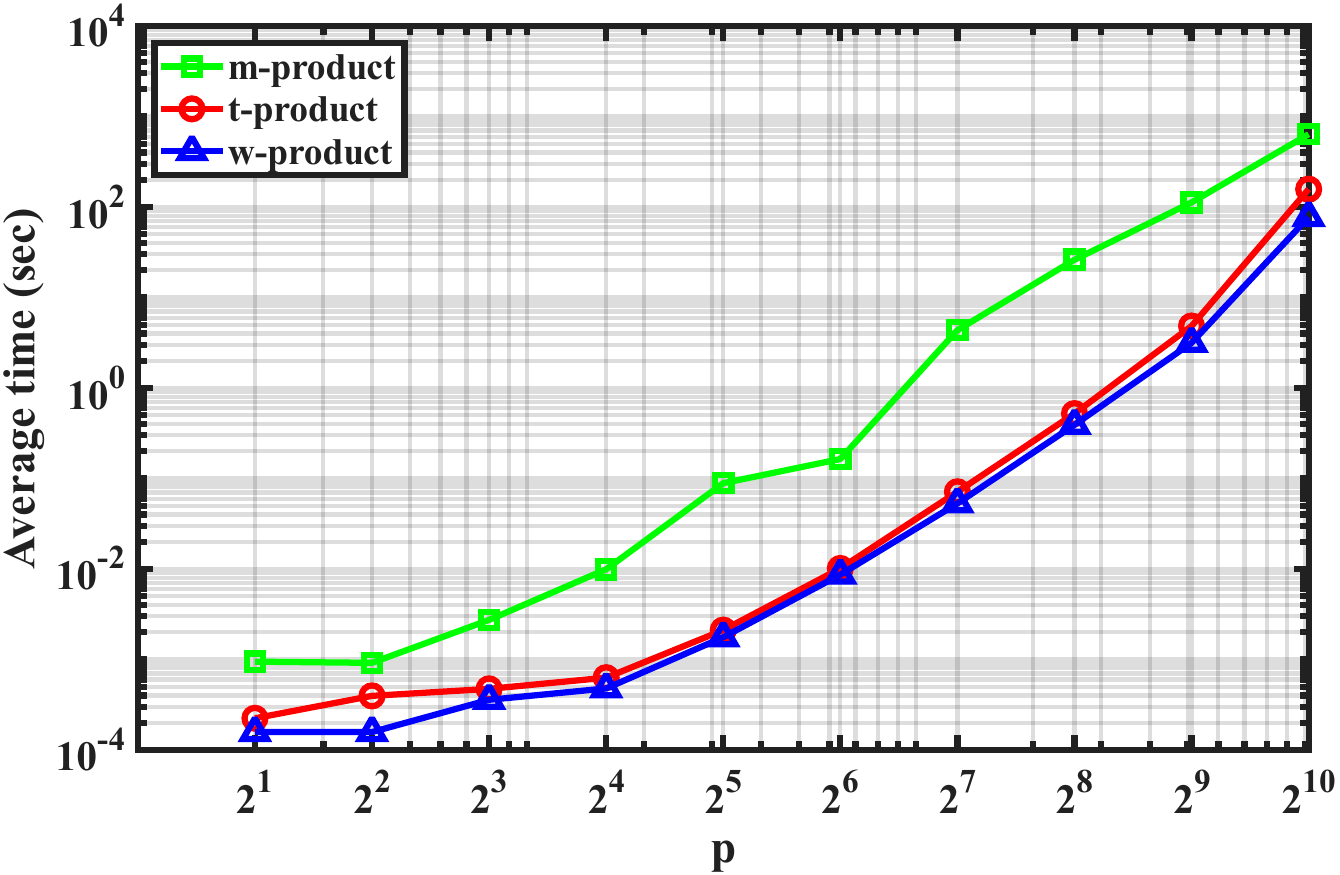}
    \caption{Experimental time taken in multiplication.}
    \label{fig:time}
\end{figure}

\begin{remark}
    One can extend the $w$-product by selecting specific prediction and update operators at any level according to the problem requirements. This flexibility in designing the operators at each scale enables customization and adaptation of the wavelet transform, making the $w$-product highly versatile and useful for various applications.
\end{remark}


\section{Applications to Hyperspectral Image Analysis}\label{sec:application}
Hyperspectral images capture detailed spectral information across many narrow, contiguous wavelength bands for each pixel, allowing precise identification of materials and objects based on their unique spectral signatures. This creates a tensor that combines spatial and spectral information, which is useful in many applications, including remote sensing.

\subsection{Experimental setup}
All comparisons are performed using Python $3.11.0$. For performance comparisons, we used the Peak Signal-to-Noise Ratio (PSNR) and Structural Similarity Index (SSIM) for two tensors $\mathcal{X}_1, \mathcal{X}_2\in\mathbb{R}^{n_1\times n_2\times p}$. The PSNR value is given by,
\begin{equation}
    \text{PSNR } = 10\log_{10}\left(\frac{n_1\times n_2\times p\times\mathrm{MPP}}{\|\mathcal{X}_1-\mathcal{X}_2\|^2_F}\right),
\end{equation}
where MPP is the maximum possible pixel value on the tensors $\mathcal{X}_1$ and $\mathcal{X}_2$. Further, the SSIM value between the tensors $\mathcal{X}_1, \mathcal{X}_2$ is given by
\begin{equation}
    \text{SSIM}\left(\mathcal{X}_1, \mathcal{X}_2\right) = \frac{1}{p}\sum_{k=1}^{p} \mathrm{SSIM}\left(\mathcal{X}_1[:,:,k], \mathcal{X}_2[:,:,k]\right),
\end{equation}
where, for $\mu_{x_1},\mu_{x_2}$ as mean, $\sigma_{x_1}^2, \sigma_{x_2}^2$ as variance and $\sigma_{x_1x_2}$ as covariance, and $c_1, c_2$ as stabilizing constants, the SSIM $(x_1,x_2)$ is given by,
\begin{equation*}
    \text{SSIM}(x_1,x_2) = \frac{\left(2\mu_{x_1}\mu_{x_2} + c_1\right)\left(2\sigma_{x_1x_2} + c_2\right)}{\left(\mu_{x_1}^2 + \mu_{x_2}^2 + c_1\right)\left(\sigma_{x_1}^2 + \sigma_{x_2}^2 + c_2\right)}
\end{equation*}

\subsection{Application to low-rank hyperspectral image reconstruction}
\subsubsection{Computational analysis}
The tensor $\mathcal{A} \in \mathbb{R}^{n_1 \times n_2 \times p}$ is considered under the assumption that $\log_2 p = L \in \mathbb{N}$ and $n_1 = n_2 = p$. In this setting, the dominant informational content of $\mathcal{A}$ is concentrated in the matrices $\mathcal{A}_{s_L}$ and $\mathcal{A}_{d_L}$ due to the presence of redundant information in the hyperspectral image. When employing the $t$-product framework (denote ``$t$-svd''), the SVD exhibits a computational complexity of $\mathcal{O}(p^4 + 2p^3 \log p)$. In comparison, the corresponding complexities for the $m$-product (denote ``$m$-svd'') and $w$-product (denote ``$w$-svd'') formulations are $\mathcal{O}(p^4 + 2p^4)$ and $\mathcal{O}(p^4 + 4p^3)$, respectively. However, by leveraging the sparsity properties inherent to wavelet representations (leveraging use of only $\mathcal{A}_{s_L}$ and $\mathcal{A}_{d_L}$ matrices), the sparse $w$-product formulation (denote ``sp-$w$-svd'') reduces the overall complexity to $\mathcal{O}(2p^3 + 4p^3)$. The corresponding performance comparison and computational advantages are illustrated in Fig.~\ref{fig:spw-svd}. 

\begin{figure}
    \centering
    \includegraphics[width=0.75\linewidth]{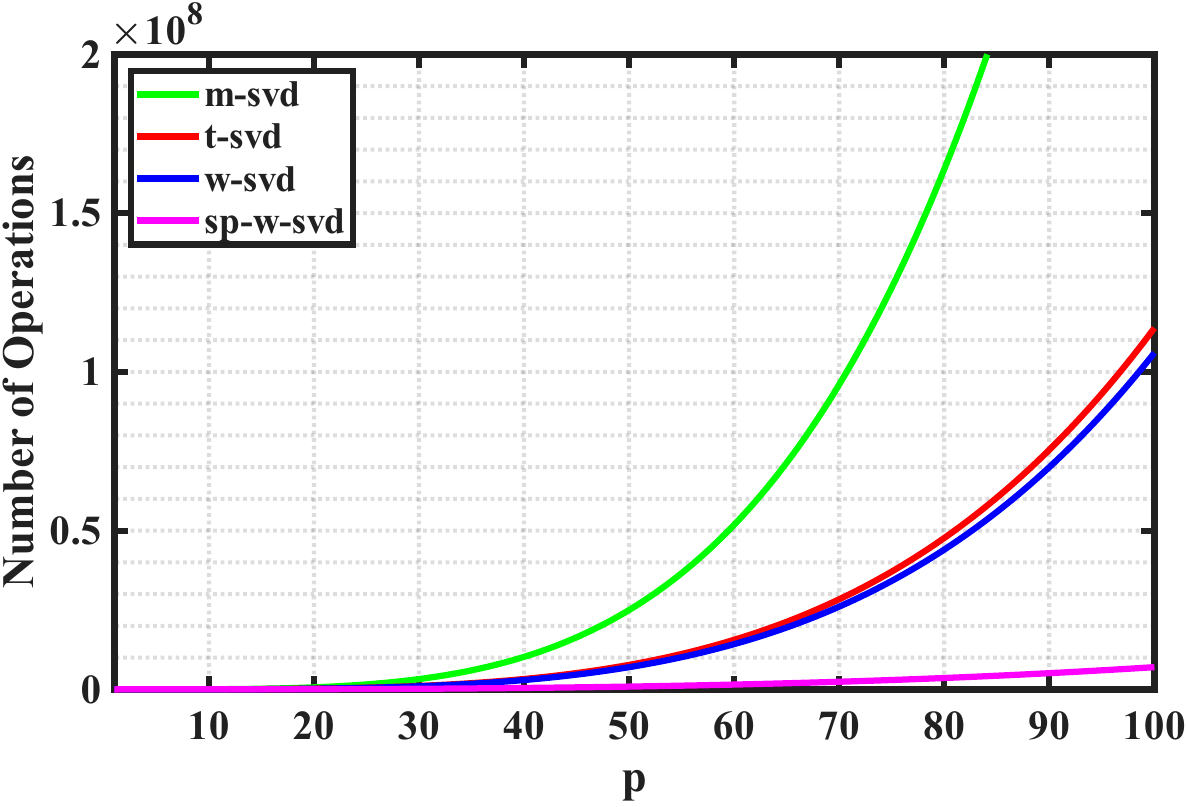}
    \caption{Number of operations required for SVD using various products.}
    \label{fig:spw-svd}
\end{figure}

\begin{remark}
    Proposed $w$-product structure plays a significant role in SVD reduction of tensor $\mathcal{A}\in\mathbb{R}^{n_1\times n_2\times p}$ with reducing the state-of-the-art complexity of ``identity-svd'' (used in majority of the libraries including \texttt{torch}) and ``$t$-svd'' of $\mathcal{O}(n_1n_2p\min\{n_1,n_2\})$ to $\mathcal{O}(n_1n_2\min\{n_1,n_2\})$.
\end{remark}

Fig.~\ref{fig:sp-svd-working} demonstrates the ``sp-$w$-svd'' reconstruction of the original hyperspectral rice image of size $256 \times 192$ with $96$ spectral bands. It can be easily seen that most of the information is retained by the smooth scale coefficients at every level, while the detailed information diminishes as we move upwards. Hence, reconstruction using the finest detail and smooth coefficients provides a good resolution of the image without sacrificing much information.

\begin{figure}
    \centering
    \includegraphics[width=0.9\linewidth]{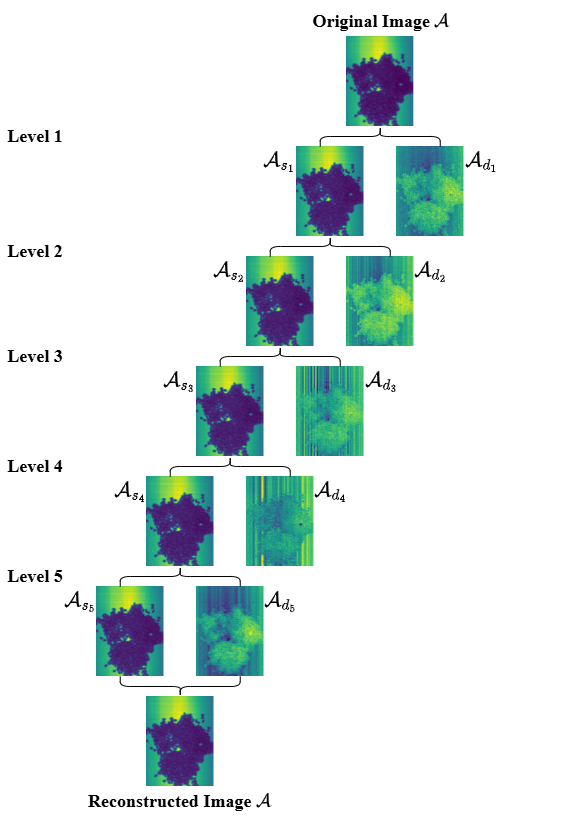}
    \caption{Sparse SVD (sp-$w$-svd) reconstruction using Rank $32$ decomposition of tensor $\mathcal{A}$ of size ${256\times 192\times 96}$.}
    \label{fig:sp-svd-working}
\end{figure}

\subsubsection{Experimental results}
We considered three hyperspectral images, coffee, yatsuhashi, and sugar\_salt\_flour, each of size $256 \times 192 \times 96$, that is, each consisting of 96 distinct spectral bands. The averaged spectral image is treated as the full-rank representation of the corresponding hyperspectral data. For the third dimension of size $96$, both ``$w$-svd'' and ``sp-$w$-svd'' perform a $5$-level wavelet decomposition. Table~\ref{tab:decom_figs} illustrates the SVD decompositions of hyperspectral images at various ranks using the ``sp-$w$-svd'' method. Table~\ref{tab:applications} presents the numerical comparison of PSNR, SSIM, and execution time among the ``$t$-svd'', ``$w$-svd'', and ``sp-$w$-svd'' techniques. It can be observed that ``$w$-svd'' achieves up to $5.91$ times speedup and ``sp-$w$-svd'' provides up to $92.21$ times speedup compared to ``$t$-svd'', while preserving comparable PSNR and SSIM performance.

\begin{table}
\caption{SVD comparison of various ranks for image decompositions.}
\label{tab:decom_figs}
\centering
\resizebox{\linewidth}{!}{
\begin{tabular}{l c c c c c c c}
\hline
Image & Full Rank & Rank $2$ & Rank $4$ & Rank $8$ & Rank $16$ & Rank $32$ & Rank $64$ \\\hline
Coffee & \includegraphics[width = 0.075\textwidth]{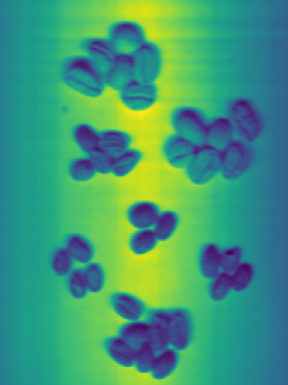} & \includegraphics[width = 0.075\textwidth]{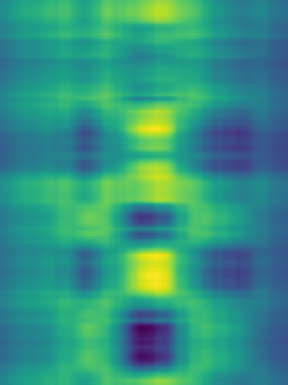} & \includegraphics[width = 0.075\textwidth]{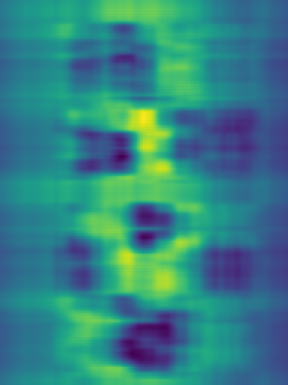} & \includegraphics[width = 0.075\textwidth]{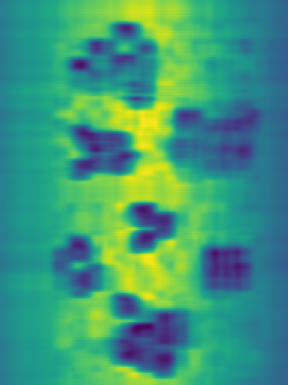} & \includegraphics[width = 0.075\textwidth]{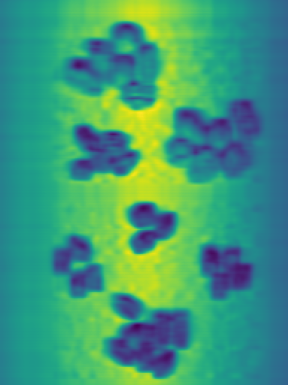} & \includegraphics[width = 0.075\textwidth]{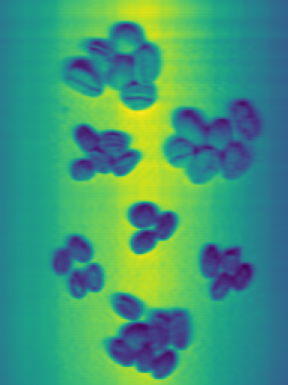} & \includegraphics[width = 0.075\textwidth]{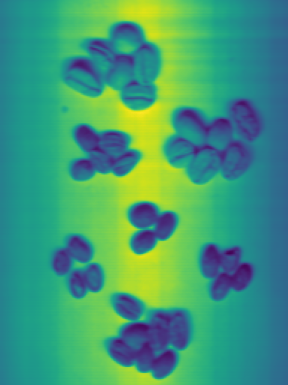}\\\hline
Sugar-Salt-Flour & \includegraphics[width = 0.075\textwidth]{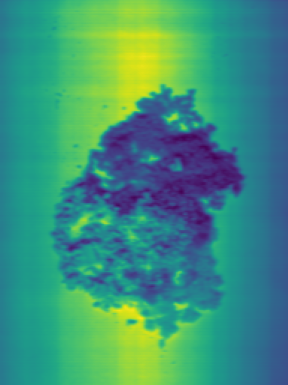} & \includegraphics[width = 0.075\textwidth]{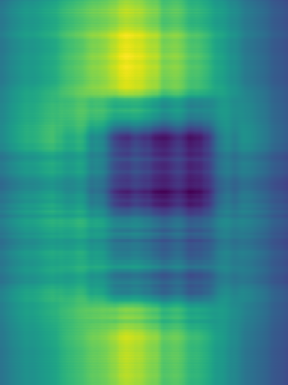} & \includegraphics[width = 0.075\textwidth]{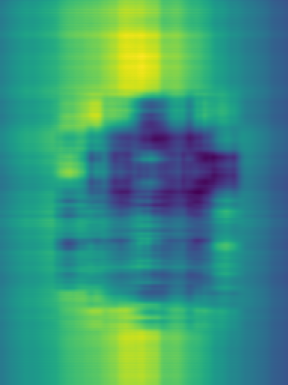} & \includegraphics[width = 0.075\textwidth]{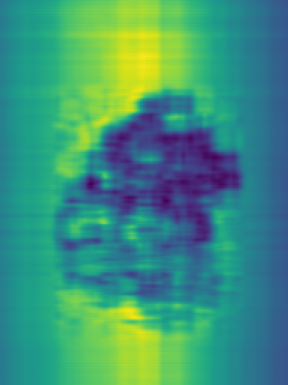} & \includegraphics[width = 0.075\textwidth]{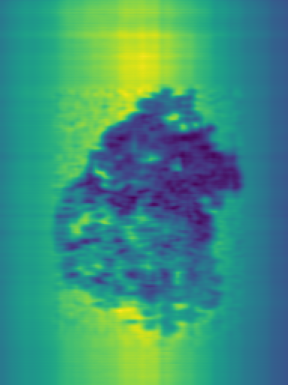} & \includegraphics[width = 0.075\textwidth]{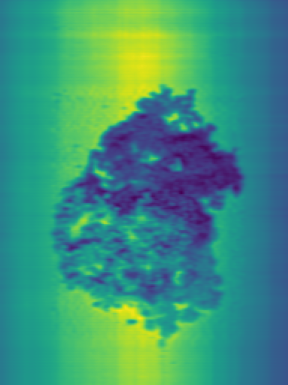} & \includegraphics[width = 0.075\textwidth]{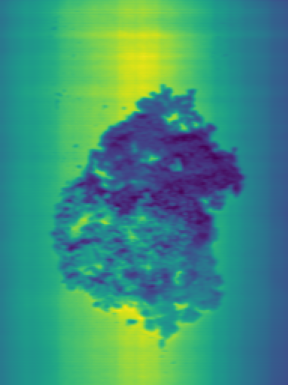}\\\hline
Yatsuhashi & \includegraphics[width = 0.075\textwidth]{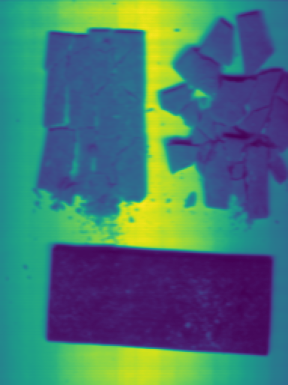} & \includegraphics[width = 0.075\textwidth]{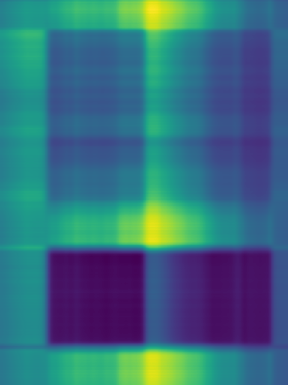} & \includegraphics[width = 0.075\textwidth]{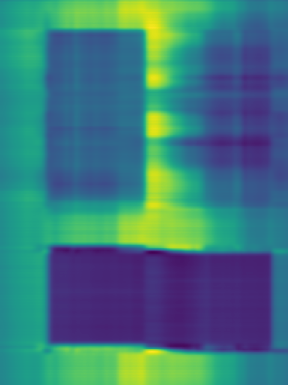} & \includegraphics[width = 0.075\textwidth]{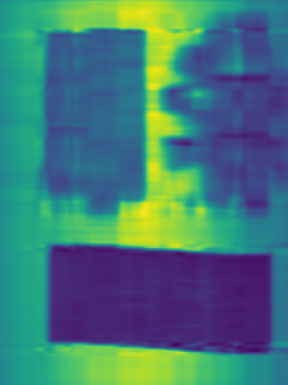} & \includegraphics[width = 0.075\textwidth]{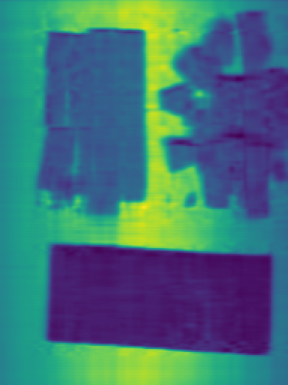} & \includegraphics[width = 0.075\textwidth]{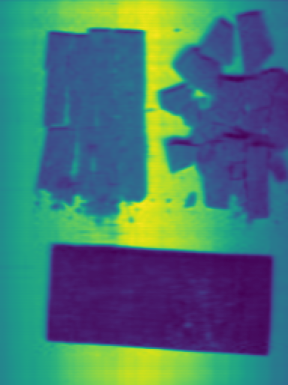} & \includegraphics[width = 0.075\textwidth]{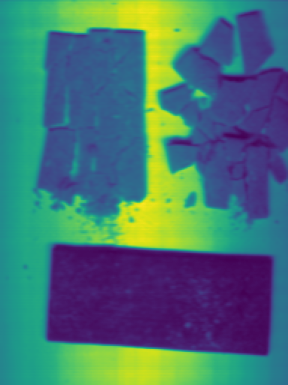}
\\\hline
\end{tabular}}
\end{table}

\begin{table*}
\centering
\caption{Results across $3$ different hyperspectral images of sizes $256\times 192\times 96$.}
\label{tab:applications}
\resizebox{\linewidth}{!}{%
\begin{tabular}{|c|c|c|c|c|c|c|c|c|c|c|c|c|c|}
\hline
\multirow{2}{*}{\textbf{Dataset}} & \multirow{2}{*}{\textbf{Rank}} & \multicolumn{4}{c|}{\textbf{$t$-svd}}  & \multicolumn{4}{c|}{\textbf{$w$-svd}} & \multicolumn{4}{c|}{\textbf{sp-$w$-svd}} \\\cline{3-14}
 & & SSIM & PSNR & Time & Speedup & SSIM & PSNR & Time & Speedup & SSIM & PSNR & Time & Speedup \\
\hline
\multirow{6}{*}{Coffee} & 2 & 0.6266 & 18.0599 & 5.0527 & 1.00$\times$ & 0.6271 & 18.0696 & 0.8583 & 5.89$\times$ & 0.6266 & 18.0612 & 0.0548 & 92.21$\times$\\
 & 4 & 0.6688 & 21.1696 & 5.1189 & 1.00$\times$ & 0.6699 & 21.1863 & 0.8812 & 5.81$\times$ & 0.6691 & 21.1745 & 0.0762 & 67.18$\times$ \\
 & 8 & 0.7914 & 25.4913 & 5.1427 & 1.00$\times$ & 0.7925 & 25.5160 & 0.8808 & 5.84$\times$ & 0.7918 & 25.5002 & 0.0783 & 65.68$\times$ \\
 & 16 & 0.9076 & 32.0552 & 5.1580 & 1.00$\times$ & 0.9098 & 32.1351 & 0.8862 & 5.82$\times$ & 0.9088 & 32.0987 & 0.0689 & 74.86$\times$ \\
 & 32 & 0.9842 & 41.7228 & 5.1278 & 1.00$\times$ & 0.9856 & 42.0571 & 0.9026 & 5.68$\times$ & 0.9852 & 42.0228 & 0.0674 & 76.08$\times$ \\
 & 64 & 0.9992 & 56.8858 & 5.1108 & 1.00$\times$ & 0.9993 & 57.2451 & 0.8961 & 5.71$\times$ & 0.9992 & 57.1023 & 0.0802 & 63.73$\times$ \\
\hline
\multirow{6}{*}{Sugar-Salt-Flour} & 2 & 0.7040 & 21.2944 & 5.1041 & 1.00$\times$ & 0.7048 & 21.3123 & 0.8642 & 5.91$\times$ & 0.7041 & 21.2964 & 0.0631 & 80.89$\times$ \\
 & 4 & 0.7595 & 23.6354 & 5.1006 & 1.00$\times$ & 0.7665 & 23.8537 & 0.8866 & 5.75$\times$ & 0.7620 & 23.7136 & 0.0757 & 67.38$\times$ \\
 & 8 & 0.8400 & 27.5823 & 5.1191 & 1.00$\times$ & 0.8490 & 27.9232 & 0.8871 & 5.77$\times$ & 0.8437 & 27.7379 & 0.0730 & 70.12$\times$ \\
 & 16 & 0.9286 & 33.0338 & 5.1793 & 1.00$\times$ & 0.9338 & 33.3231 & 0.8912 & 5.81$\times$ & 0.9301 & 33.1185 & 0.0805 & 64.34$\times$ \\
 & 32 & 0.9833 & 41.4049 & 5.1361 & 1.00$\times$ & 0.9856 & 41.8537 & 0.8968 & 5.73$\times$ & 0.9842 & 41.5716 & 0.0802 & 64.04$\times$ \\
 & 64 & 0.9988 & 55.6019 & 5.1562 & 1.00$\times$ & 0.9990 & 56.3855 & 0.9006 & 5.73$\times$ & 0.9990 & 56.0211 & 0.0732 & 70.44$\times$ \\
\hline
\multirow{6}{*}{Yatsuhashi} & 2 & 0.6090 & 18.6451 & 5.0454 & 1.00$\times$ & 0.6165 & 18.7917 & 0.8646 & 5.84$\times$ & 0.6121 & 18.6984 & 0.0641 & 78.71$\times$ \\
 & 4 & 0.7282 & 22.9088 & 5.0624 & 1.00$\times$ & 0.7292 & 22.9382 & 0.8892 & 5.69$\times$ & 0.7285 & 22.9169 & 0.0812 & 62.34$\times$ \\
 & 8 & 0.8055 & 27.5038 & 5.1430 & 1.00$\times$ & 0.8076 & 27.5739 & 0.8898 & 5.78$\times$ & 0.8058 & 27.5171 & 0.0795 & 64.69$\times$ \\
 & 16 & 0.8960 & 33.0988 & 5.1076 & 1.00$\times$ & 0.8991 & 33.2449 & 0.8893 & 5.74$\times$ & 0.8971 & 33.1488 & 0.0802 & 63.69$\times$ \\
 & 32 & 0.9685 & 40.6619 & 5.1695 & 1.00$\times$ & 0.9707 & 40.9473 & 0.8980 & 5.76$\times$ & 0.9697 & 40.8092 & 0.0796 & 64.94$\times$ \\
 & 64 & 0.9970 & 52.1228 & 5.1286 & 1.00$\times$ & 0.9973 & 52.6252 & 0.9035 & 5.68$\times$ & 0.9972 & 52.3533 & 0.0753 & 68.11$\times$ \\
\hline
\end{tabular}}
\end{table*}

\subsection{Application to hyperspectral image deblurring}
\subsubsection{Computational analysis}

Given an observed blurred hyperspectral image tensor $\mathcal{B}\in\mathbb{R}^{n_1 \times n_2 \times p}$, the deblurring problem can be expressed as $\mathcal{X} = \mathcal{A}^\dagger \star \mathcal{B}$, where $\mathcal{X}\in\mathbb{R}^{n_1 \times n_2 \times p}$ is the required true hyperspectral image tensor and $\mathcal{A}\in\mathbb{R}^{n_1 \times n_1 \times p}$ is the tensor blurring operator.
Under the assumptions $\log_2 p = L \in \mathbb{N}$ and $n_1 = n_2 = p$, it is observed that the dominant informational content of $\mathcal{X}$ is concentrated primarily in the matrices $\mathcal{X}_{s_L}$ and $\mathcal{X}_{d_L}$. It is known that the computation of the pseudo-inverse can be performed using SVD, where the computational complexity for a matrix $Z \in \mathbb{R}^{m \times n}$ is $\mathcal{O}(mn \min(m, n))$. For conventional $m$-product reconstruction, the computational complexity of deblurring is $\mathcal{O}(2p^4 + 3p^4)$. In contrast, the $t$-product and $w$-product reconstructions exhibit reduced complexities of $\mathcal{O}(2p^4 + 3p^3 \log p)$ and $\mathcal{O}(2p^4 + 6p^3)$, respectively. However, by leveraging the properties of hyperspectral images and wavelet transformations, the sparse $w$-product (sp-$w$-product) formulation further reduces the overall complexity of reconstruction to $\mathcal{O}(4p^3 + 6p^3)$.

\begin{remark}\label{hyperspectral_redundant}
    Due to highly correlated spectral information and significant redundancy in hyperspectral remote sensing images, there is very little variation in pixel-wise information. Due to this, most of the detailed components of the wavelet decomposition tend to be nearly zero~\cite{luo2021redundant, liao2023redundant}.
\end{remark}

\subsubsection{Experimental results}
Consider an input tensor $\mathcal{X} \in \mathbb{R}^{n_1 \times n_2 \times p}$ representing a hyperspectral image of spatial size $n_1 \times n_2$ with $p$ spectral bands. For the blurring operation, vertical and horizontal linear blurring are assumed. Given vertical blur size $b_v$ and horizontal blur size $b_h$, we define the vertical and horizontal blur vectors as,
\begin{equation*}
    c_v(i) = \begin{cases}
        \frac{b_v - i}{3 b_v}, & 0 \leq i < b_v \\
        0, & \text{otherwise}
    \end{cases}, \quad i = 0, 1, \ldots, n_1-1,
\end{equation*}
\begin{equation*}
    c_h(j) = \begin{cases}
        \frac{b_h - j}{3 b_h}, & 0 \leq j < b_h \\
        0, & \text{otherwise}
    \end{cases}, \quad j = 0, 1, \ldots, n_2-1.
\end{equation*}

Accordingly, we define Toeplitz matrices $A_v \in \mathbb{R}^{n_1 \times n_1}$ and $A_h \in \mathbb{R}^{n_2 \times n_2}$ with entries defined as $(A_v)_{ij} = c_v(|i-j|)$ and $(A_h)_{ij} = c_h(|i-j|)$. Going forward, we define multiband blur tensors $\mathcal{A_V}$ and $\mathcal{A_H}$ such that for each $k=1,2,\dots,p$, 
\begin{equation*}
    \mathcal{A_V}[:, :, k] = A_v, \quad \mathcal{A_H}[:, :, k] = A_h.
\end{equation*}

Then, the blurring operation on the hyperspectral image $\mathcal{X}$ is defined as,
\begin{equation}
    \mathcal{B} = \mathcal{A_V} \star_w \mathcal{X} \star_w \mathcal{A_H}^\top.
\end{equation}

The deblurring operation, which provides the least squares solution to extract the original hyperspectral image from the blurred image using the known blurring operators, is given by,
\begin{equation}
    \mathcal{X} = \mathcal{A_V}^\dagger \star_w \mathcal{B} \star_w \left(\mathcal{A_H}^\top\right)^\dagger,
\end{equation}

For hyperspectral images, we have used the remote sensing image dataset provided by Acerca de Grupo de Inteligencia Computacional (GIC)~\cite{dataset2}. From various available hyperspectral images, we consider three different images (\texttt{PaviaU}, \texttt{Salinas}, and \texttt{Botswana}) to show the efficiency of the $w$-product over existing products,
\begin{enumerate}
    \item[i).] Pavia University: Acquired by the ROSIS sensor over Pavia, Italy, with spatial dimensions $610 \times 340$ pixels and $102$ spectral bands. The geometric resolution is $1.3$m.
    \item[ii).] Salinas: Captured by the AVIRIS sensor over Salinas Valley, California, originally with $224$ bands; after removing $20$ water absorption bands, $204$ bands remain. The spatial resolution is $3.7$m.
    \item[iii).] Botswana: Collected by NASA EO-1 Hyperion sensor over the Okavango Delta in $2001-2004$. The data has $30$m spatial resolution and $144$ spectral bands after removing uncalibrated and noisy water absorption bands.
\end{enumerate}

We considered three hyperspectral images, \texttt{PaviaU} of size $610\times 340\times 102$, \texttt{Salinas} of size $512\times 217\times 204$, and \texttt{Botswana} of size $1476\times 256\times 144$. For linear blurring tensors $\mathcal{A_V}$ and $\mathcal{A_H}$, we assume values $b_v = b_h = 10$. For \texttt{PaviaU} with $102$ spectral bands, $w$-product reconstruction and sp-$w$-product reconstruction perform $1$-level wavelet decomposition. For \texttt{Salinas} and \texttt{Botswana} with $204$ and $144$ spectral bands, $w$-product reconstruction and sp-$w$-product reconstruction perform $2$-level and $4$-level wavelet decomposition, respectively. Table~\ref{tab:deblur_figs} illustrates the deblurring results of the hyperspectral images using the ``$t$-product'', ``$w$-product'', and ``sp-$w$-product'' reconstruction methods. Table~\ref{tab:results_deblur} presents the numerical comparison of PSNR, SSIM, and execution time among the ``$t$-product'', ``$w$-product'', and ``sp-$w$-product'' reconstruction techniques. 

From Table~\ref{tab:results_deblur}, it can be observed that ``sp-$w$-product'' reconstruction achieves up to $27.88$ times speedup compared to ``$t$-product'' reconstruction, along with improvement in PSNR and SSIM values. It can also be observed that the $w$-product and sp-$w$-product reconstructions have the same PSNR and SSIM, which is due to the fact that hyperspectral images have a large amount of redundant information (given by Remark~\ref{hyperspectral_redundant}). Also, it can be seen that as the number of decomposition levels increases from $1$ to $4$, the speedup achieved by $w$-product and sp-$w$-product reconstructions over ``$t$-product'' reconstruction increases exponentially.

\begin{table}[]
\centering
\caption{Hyperspectral Image Deblurring Performance across Methods and Images.}
\label{tab:results_deblur}
\resizebox{\linewidth}{!}{%
\begin{tabular}{|c|c|c|c|c|}
\hline
\textbf{Image ($w$-levels)} & \textbf{Method} & \textbf{Time (sec)} & \textbf{SSIM} & \textbf{PSNR} \\\hline
\multirow{4}{*}{} 
& $t$-product & 23.6328 & 0.4201 & 21.1510 \\
PaviaU ($1$ level) & $w$-product & 9.8508 & 0.6992 & 26.7206 \\
$610\times 340\times 102$ & sp-$w$-product & 9.7492 & 0.6992 & 26.7206 \\ 
& Speedups & $2.42\times$ & $-$ & $-$ \\
\hline
\multirow{4}{*}{} 
& $t$-product & 31.4715 & 0.2046 & 17.6921 \\ 
Salinas ($2$ levels) & $w$-product & 10.9848 & 0.5726 & 23.6444 \\
$512\times 217\times 204$ & sp-$w$-product & 6.5197 & 0.5726 & 23.6444 \\
& Speedups & $4.83\times$ & $-$ & $-$ \\
\hline
\multirow{4}{*}{} 
& $t$-product & 204.7620 & 0.7552 & 34.4265 \\ 
Botswana ($4$ levels) & $w$-product & 40.5728 & 0.8622 & 36.7636 \\
$1476\times 256\times 144$ & sp-$w$-product & 7.3428 & 0.8622 & 36.7636 \\
& Speedups & $27.88\times$ & $-$ & $-$ \\
\hline
\end{tabular}}
\end{table}

\begin{table*}
\caption{Hyperspectral Image Deblurring Reconstruction on Remote Sensing Data.}
\label{tab:deblur_figs}
\centering
\resizebox{\linewidth}{!}{
\begin{tabular}{l c c c c c}
\hline
Image & Original Image & Blurred Image & $t$-product Reconstructed & $w$-product Reconstructed & sp-$w$-product Reconstructed \\\hline

PaviaU & \includegraphics[width = 0.15\textwidth]{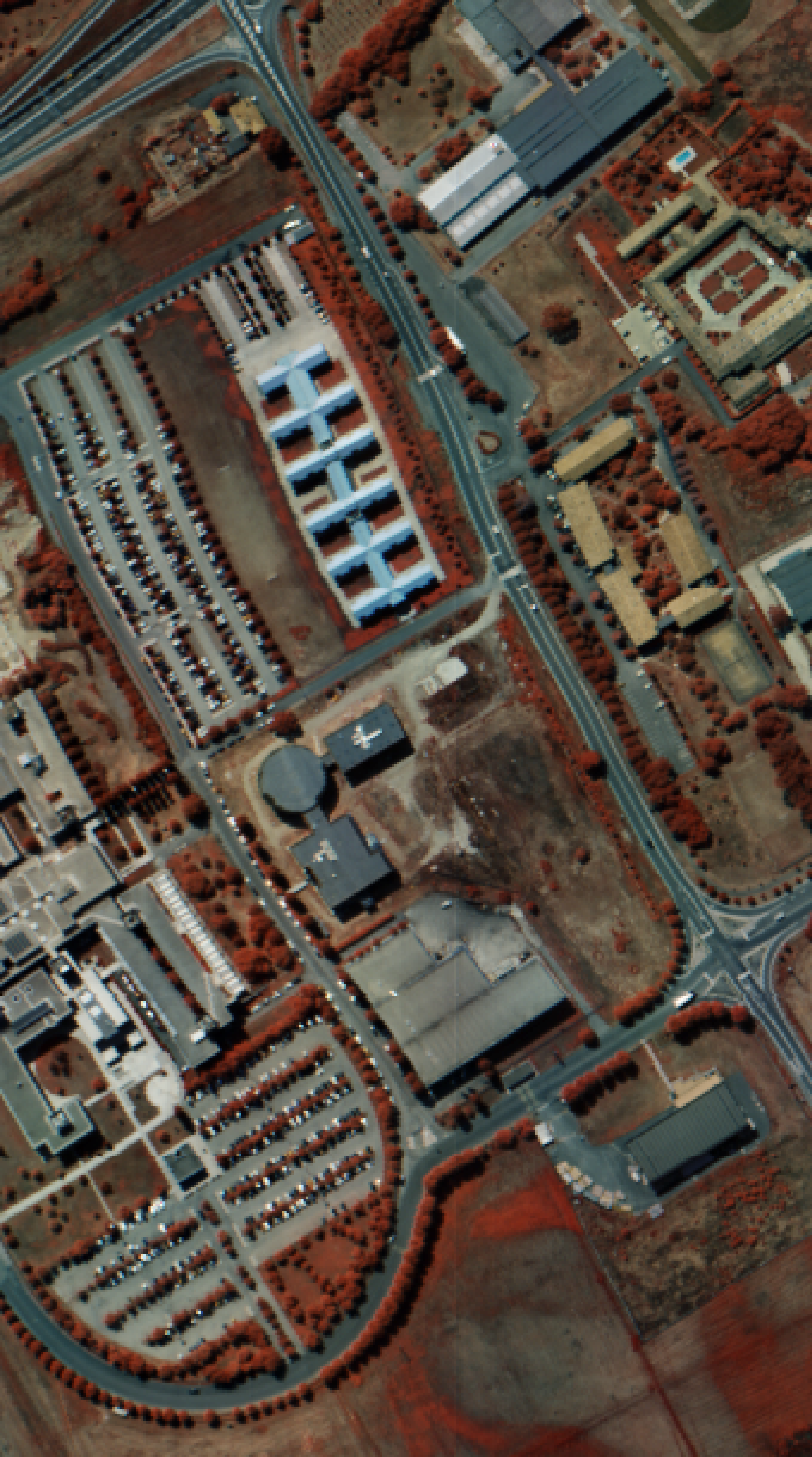} & \includegraphics[width = 0.15\textwidth]{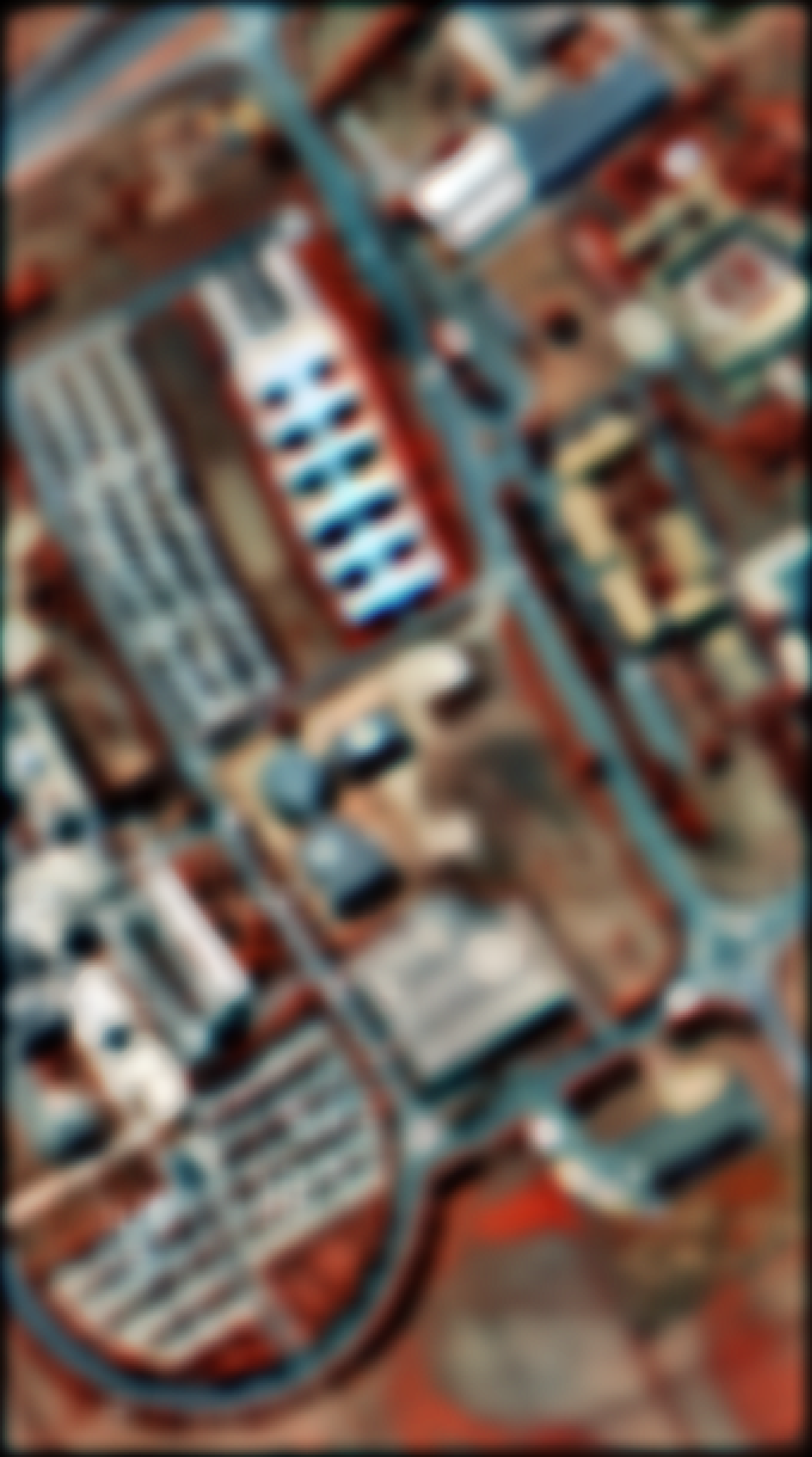} & \includegraphics[width = 0.15\textwidth]{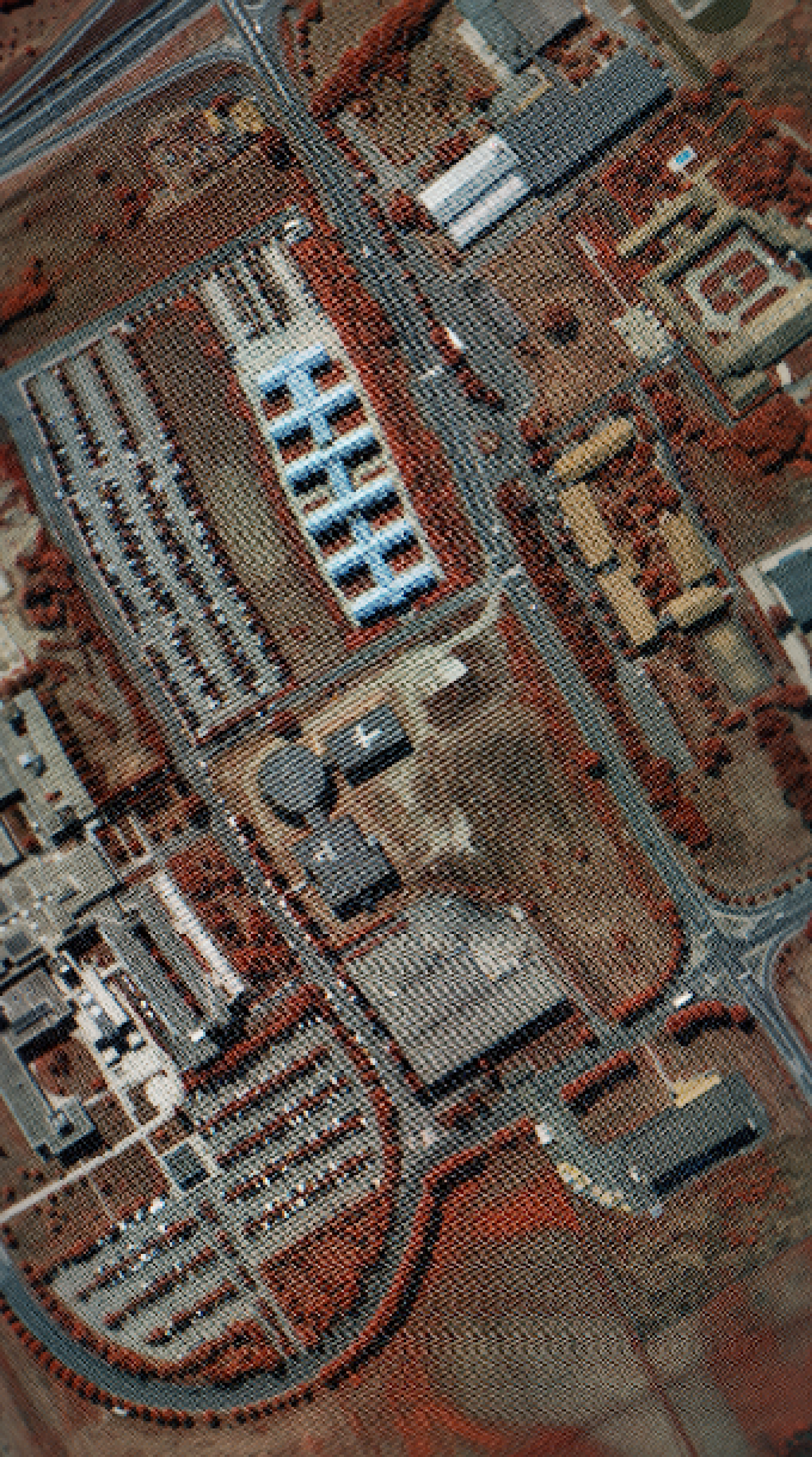} & \includegraphics[width = 0.15\textwidth]{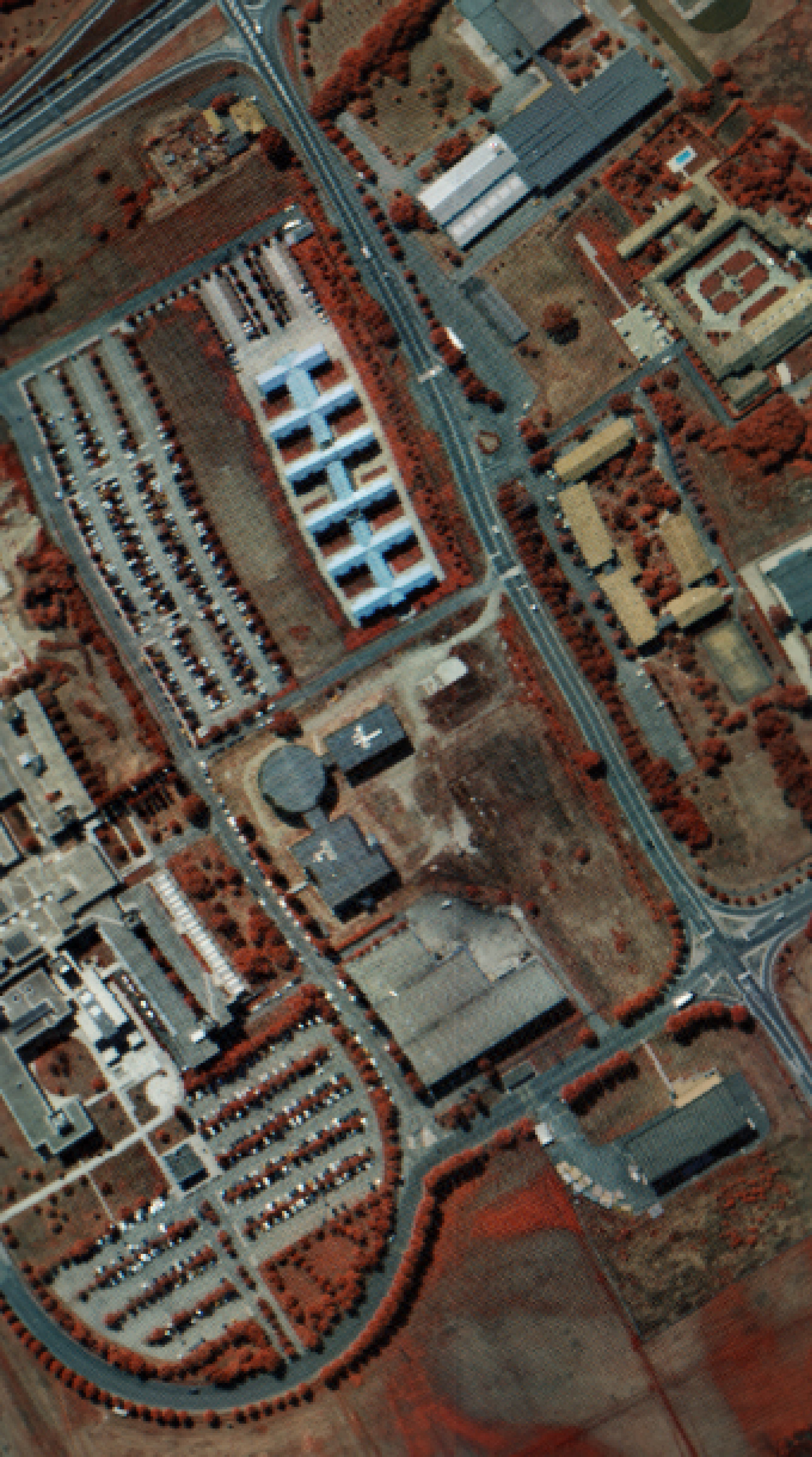} & \includegraphics[width = 0.15\textwidth]{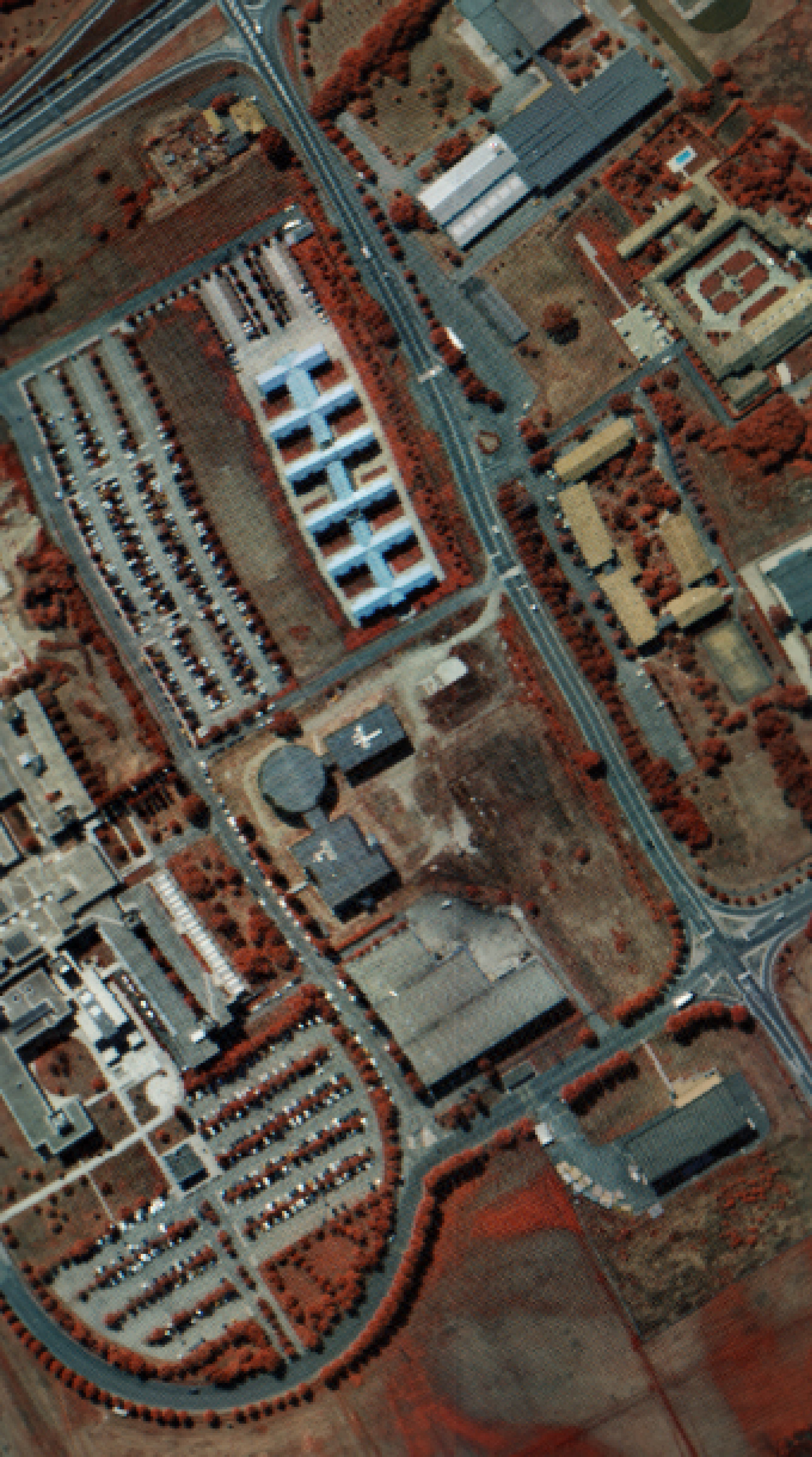}\\\hline

Salinas & \includegraphics[width = 0.15\textwidth]{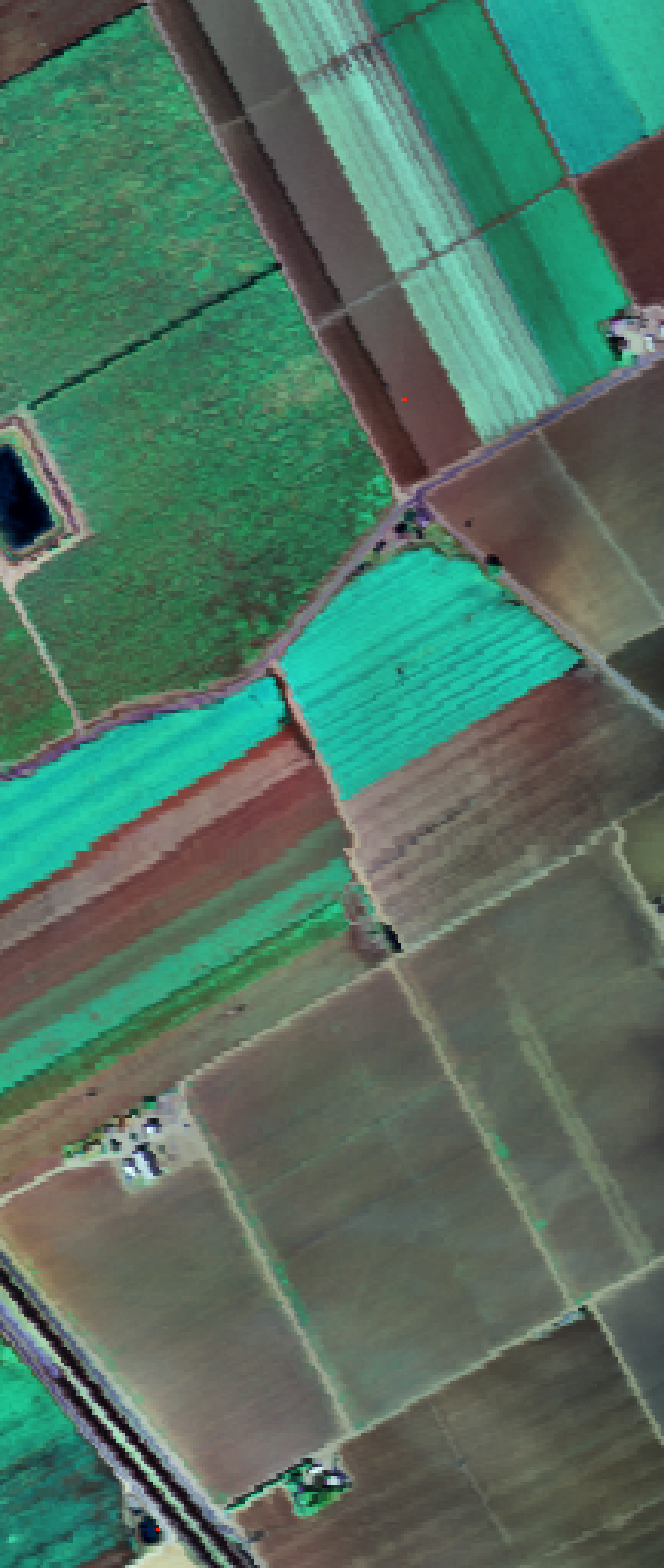} & \includegraphics[width = 0.15\textwidth]{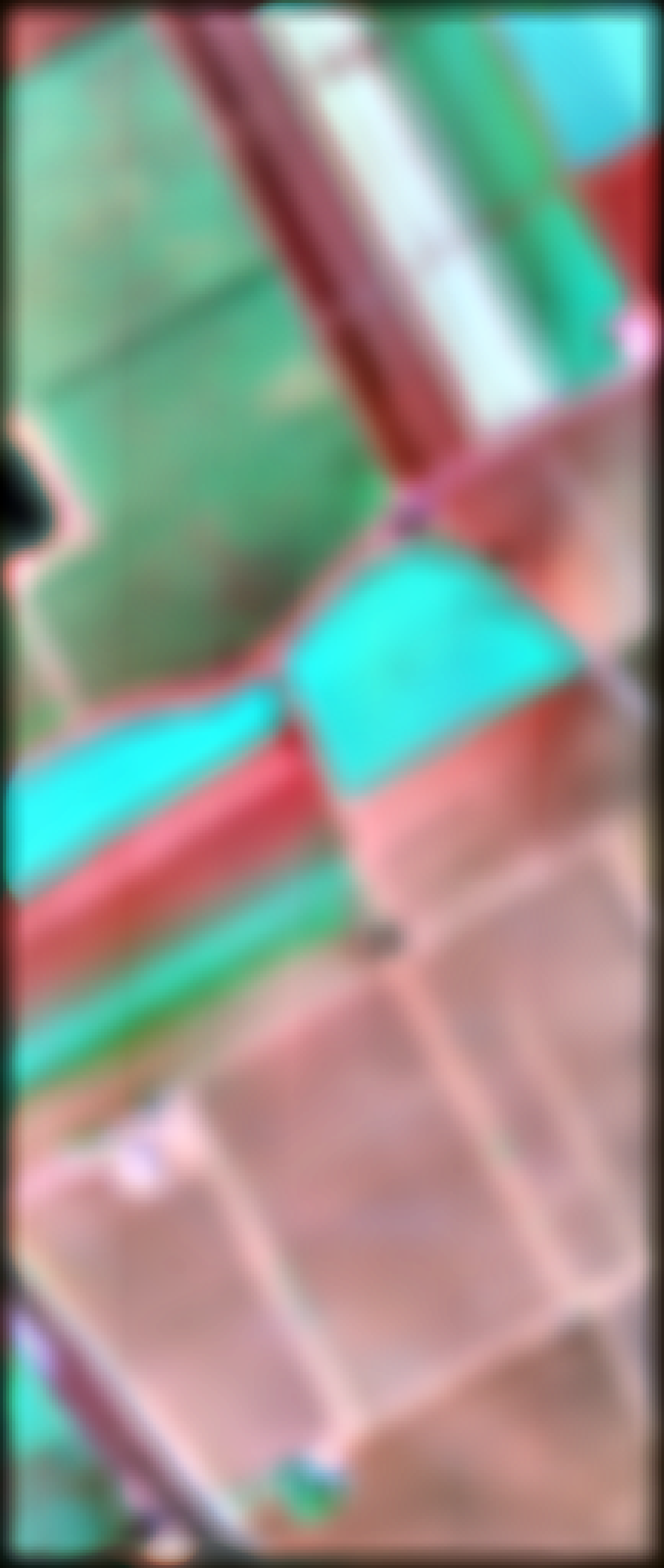} & \includegraphics[width = 0.15\textwidth]{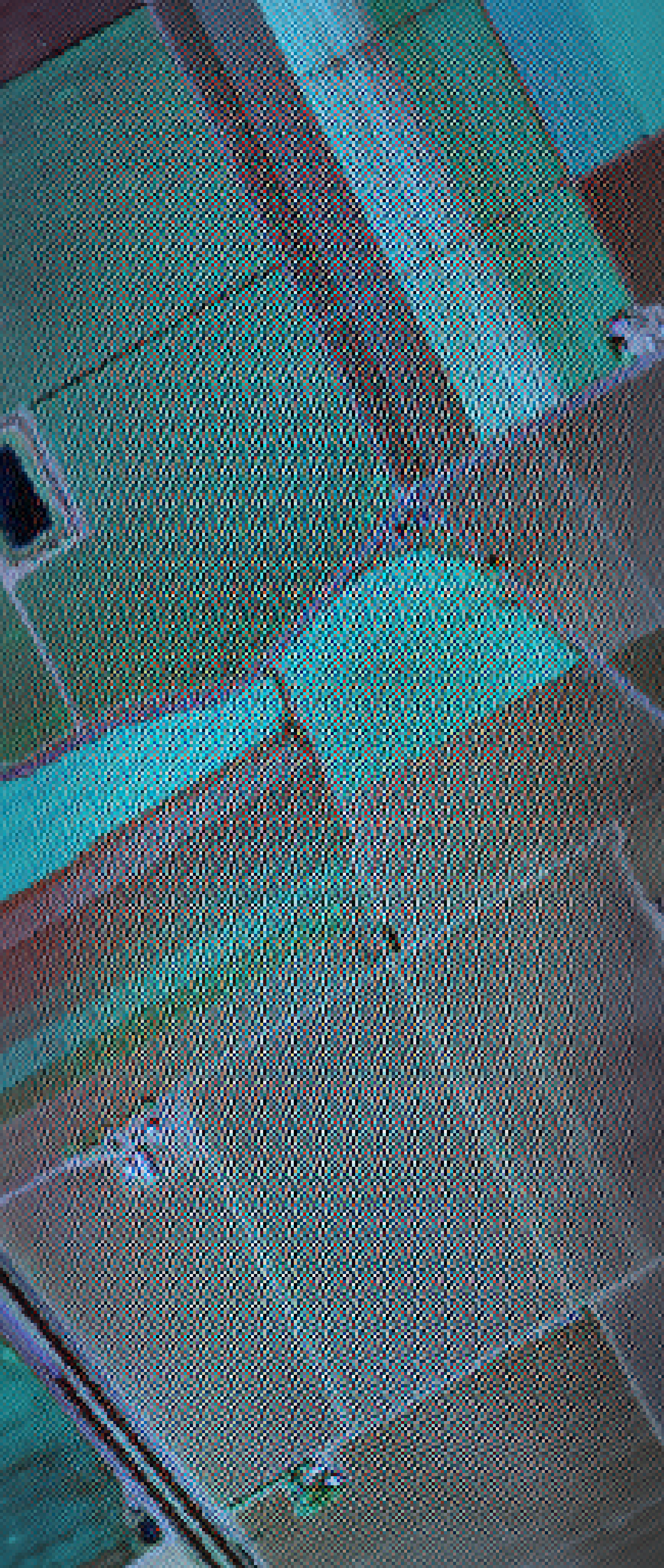} & \includegraphics[width = 0.15\textwidth]{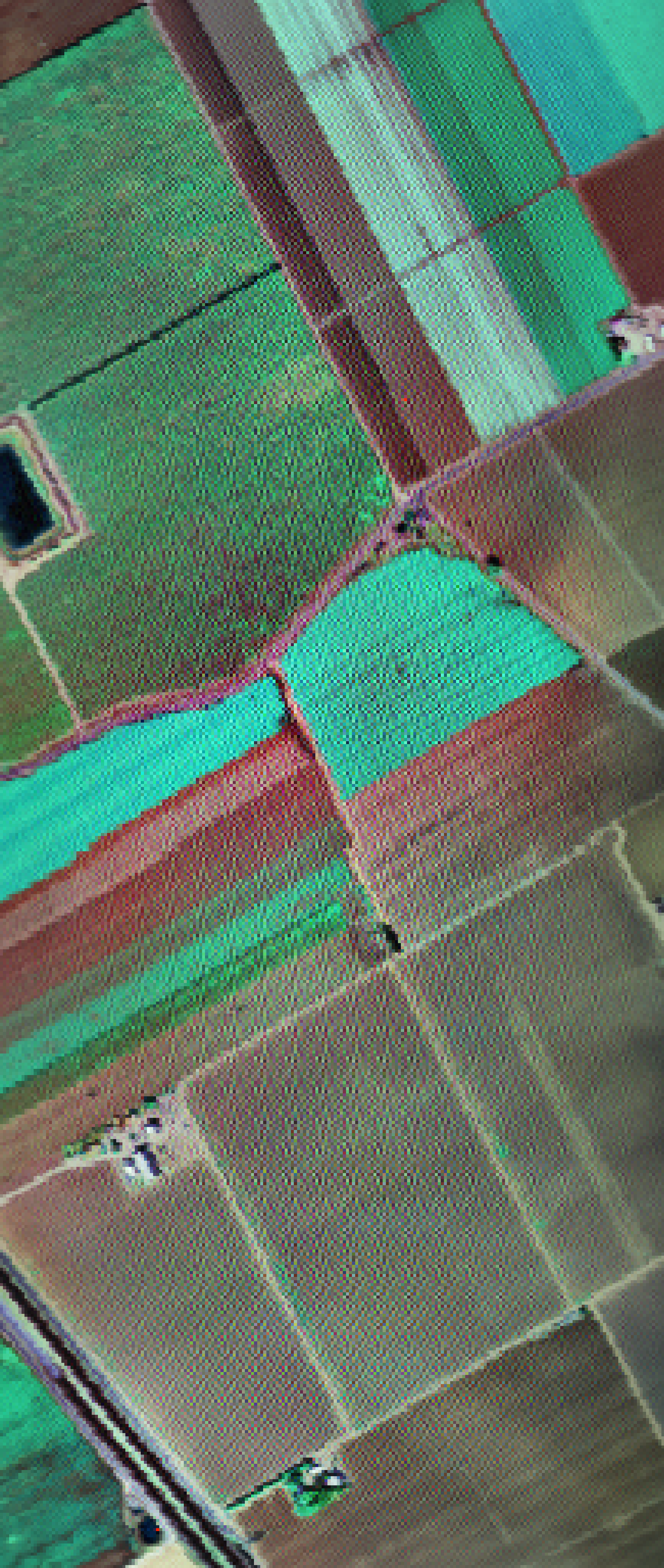} & \includegraphics[width = 0.15\textwidth]{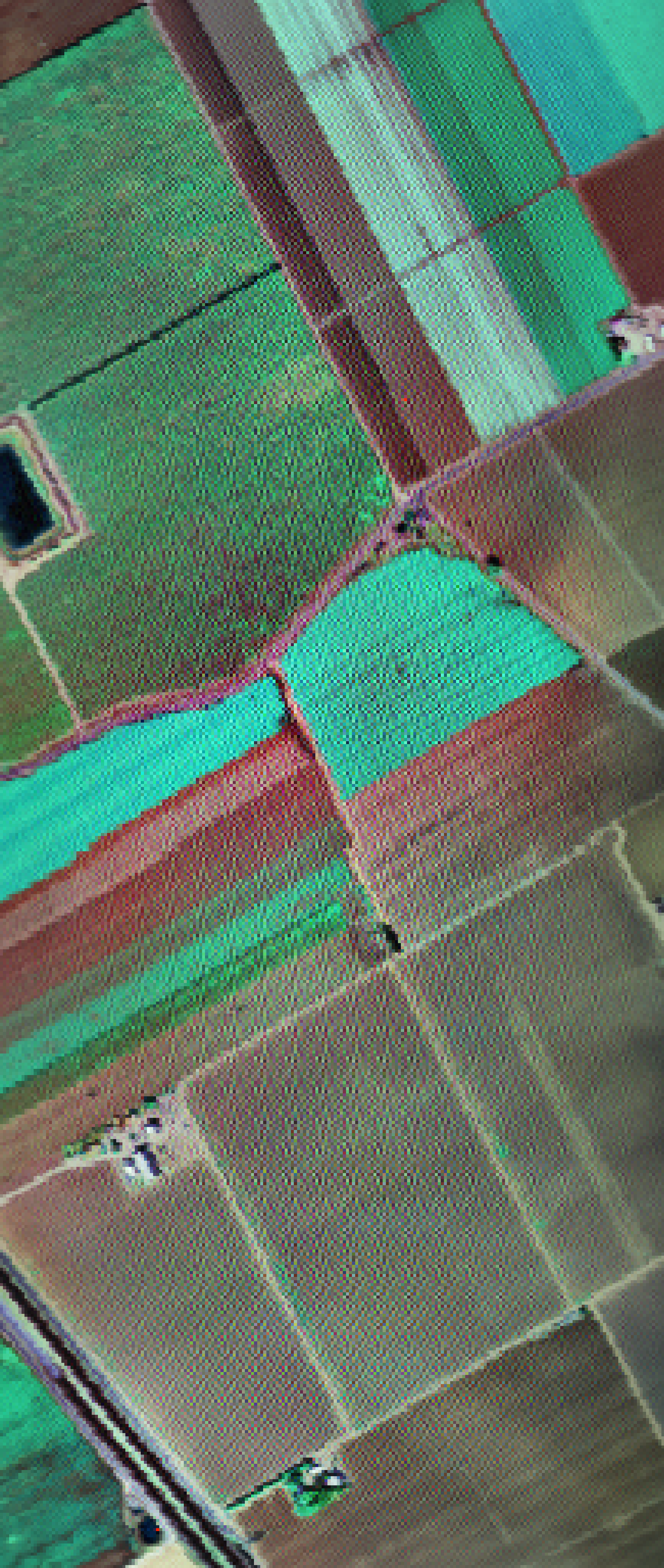}\\\hline

Botswana & \includegraphics[scale = 0.385]{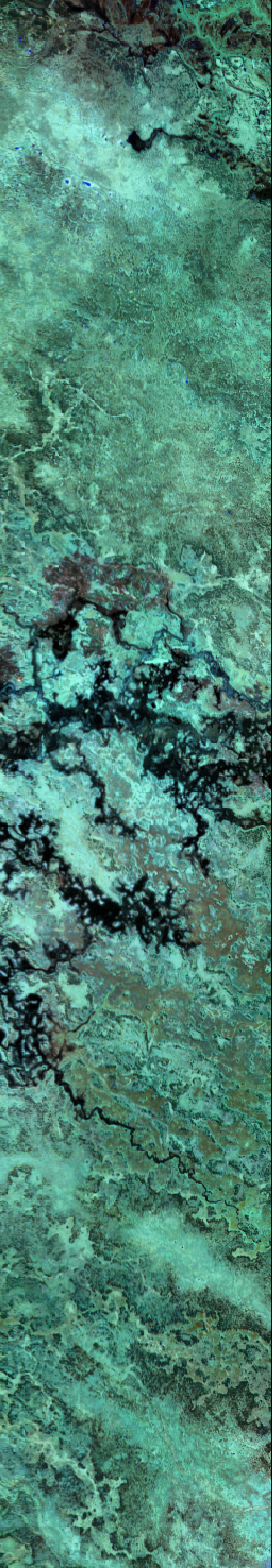} & \includegraphics[scale = 0.385]{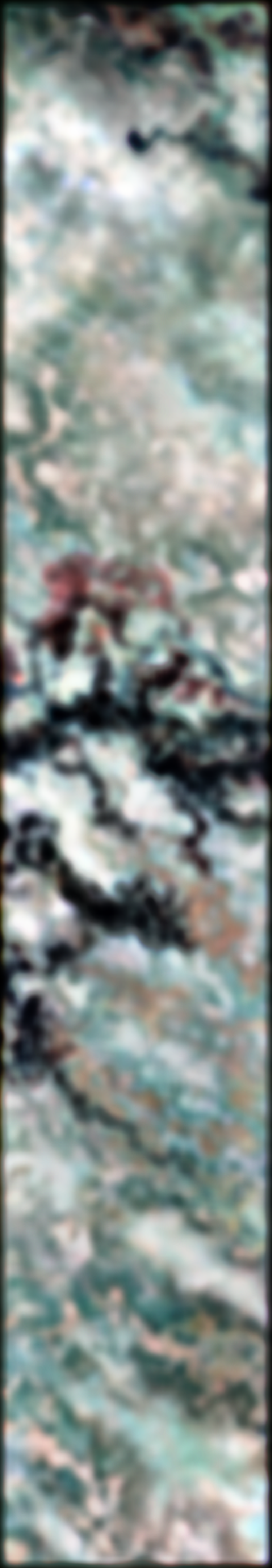} & \includegraphics[scale = 0.385]{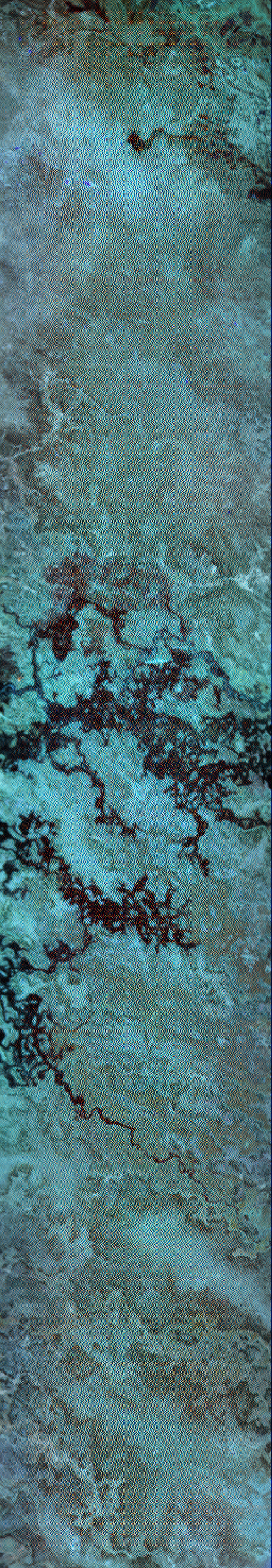} & \includegraphics[scale = 0.385]{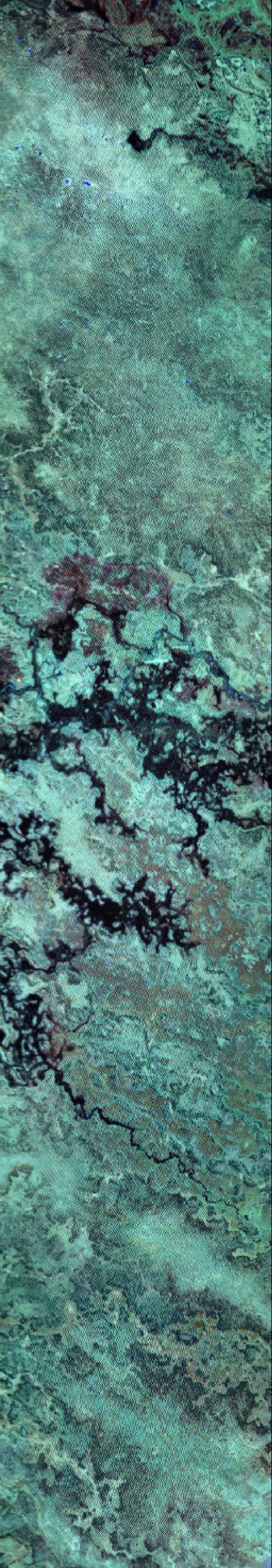} & \includegraphics[scale = 0.385]{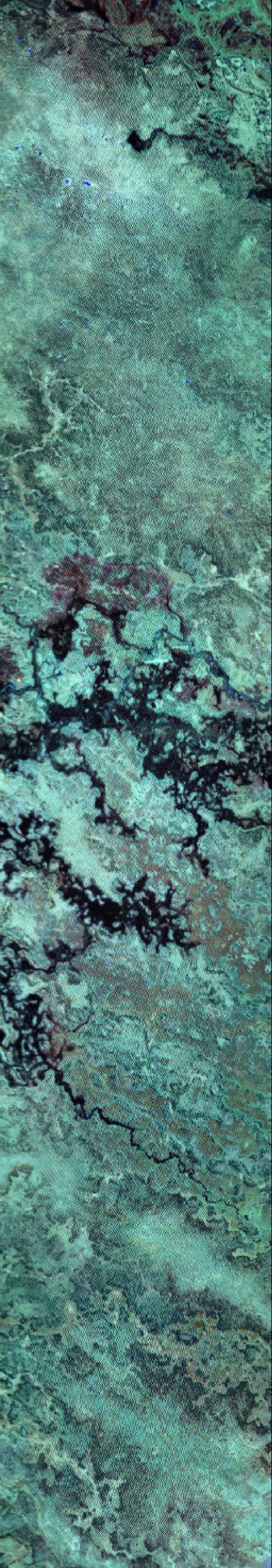}\\\hline

\end{tabular}}
\end{table*}

\section{Conclusions and Future Directions}\label{sec:conclusion}
The proposed wavelet-inspired $w$-product offers an efficient 
tensor multiplication framework, achieving linear transform complexity while retaining essential algebraic properties and enabling fast tensor decompositions. This novel approach not only facilitates fast tensor decompositions but also preserves multidimensional structures through a multiresolution representation that utilized second-generation wavelets. The Moore-Penrose inverse of the tensors is developed using the $w$-product, and its key properties are established. The supporting numerical examples validate the theoretical results. In low-rank hyperspectral image reconstruction, ``sp-$w$-svd'' achieved a speedup of up to $92.21$ times compared to ``$t$-svd'', with comparable PSNR and SSIM performance. This significant acceleration, coupled with impressive reconstruction fidelity, makes the $w$-product highly suitable for large-scale hyperspectral data processing. Further analysis on the hyperspectral image deblurring, ``sp-$w$-product'' reconstruction attains a speedup of up to $27.88$ times compared to ``$t$-product'' reconstruction with improved PSNR and SSIM values. The proposed $w$-product and sp-$w$-product decompositions achieved exponential increase in speedups as the level of decomposition increases compared to the $t$-product decompositions. In general, the proposed framework not only reduces computational cost but also provides a robust tool for advanced tensor-based hyperspectral image analysis.

Future work focuses on customizing wavelet operators for various data, extending the $w$-product to higher-order tensors and machine learning applications, refining theoretical bounds, and enabling real-time implementations on parallel hardware for large-scale multidimensional data analysis.


\section*{Declarations}
\subsection*{Ethical approval}
Not applicable.
\vspace{-0.4cm}
\subsection*{Availability of supporting data} 
The hyperspectral image data set used in this study is publicly available on Kaggle~\cite{dataset}, and Remote Sensing Data is provided by Acerca de Grupo de Inteligencia Computacional (GIC)~\cite{dataset2}.
\vspace{-0.4cm}
\subsection*{Competing interests}
The author declares that they have no conflicts of interest.
\vspace{-0.4cm}
\subsection*{Funding}
The second author is thankful to the ANRF, India, Grant No. EEQ/2022/001065.


\bibliographystyle{ieeetr}
\bibliography{bibliography}

\end{document}